\newtheorem{Theorem}{Theorem}
\newtheorem{Proposition}[Theorem]{Proposition}
\newtheorem{Lemma}[Theorem]{Lemma}
\newtheorem{Corollary}[Theorem]{Corollary}
\theoremstyle{definition}
\newtheorem{Definition}[Theorem]{Definition}
\theoremstyle{remark}
\newtheorem*{Remark}{Remark}
\renewcommand{\@makefnmark}{\mbox{\textsuperscript{}}}
\newcommand{\half}{0.5}
\newcommand{\asl}{\widehat{\mathfrak{sl}}}
\newcommand{\g}{\mathfrak{g}}
\newcommand{\n}{\mathfrak{n}}
\newcommand{\ft}{\mathfrak{t}}
\newcommand{\fb}{\mathfrak{b}}
\newcommand{\fp}{\mathfrak{p}}
\newcommand{\B}{\mathfrak{B}}
\newcommand{\F}{\mathfrak{F}}
\newcommand{\oF}[1]{\overset{\circ}{\F}{}^{#1}}
\newcommand{\Gr}{\mathcal{G}\emph{r}}
\newcommand{\cL}{\mathcal{L}}
\newcommand{\Z}{\mathbb{Z}}
\newcommand{\K}{\mathcal{K}}
\newcommand{\cZ}{\mathcal{Z}}
\newcommand{\cN}{\mathcal{N}}
\newcommand{\cV}{\mathcal{V}}
\newcommand{\cU}{\mathcal{U}}
\renewcommand{\O}{\mathcal{O}}
\newcommand{\GK}{G\left(\K\right)}
\newcommand{\C}{\mathbb{C}}
\newcommand{\Irr}{\text{Irr}}
\newcommand{\val}{\text{val}}
\newcommand{\vgamma}{\langle\gamma|}
\newcommand{\vmu}{\langle\mu|}
\newcommand{\Stot}{\mathbb{S}_{\mathfrak{g},\mathfrak{b},{\mathfrak{p}_i}}}
\newcommand{\Fock}{\mathbf{F}}
\newcommand{\wt}{\text{wt}}
\newcommand{\POne}{\mathbb{P}^1}
\newcommand{\AOne}{\mathbb{A}^1}
\begin{document}

\title{Double MV Cycles and the Naito-Sagaki-Saito Crystal}

\author{Dinakar Muthiah}

\begin{abstract}
The theory of MV cycles associated to a complex reductive group $G$ has proven to be a rich source of structures related to representation theory. We investigate double MV cycles, which are analogues of MV cycles in the case of an affine Kac-Moody group. We prove an explicit formula for the Braverman-Finkelberg-Gaitsgory \cite{BFG} crystal structure on double MV cycles, generalizing a finite-dimensional result of Baumann and Gaussent \cite{baumanngaussent}. As an application, we give a geometric construction of the Naito-Sagaki-Saito \cite{NSS} crystal via the action of $\widehat{SL}_n$ on Fermionic Fock space. In particular, this construction gives rise to an isomorphism of crystals between the set of double MV cycles and the Naito-Sagaki-Saito crystal. As a result, we can independently prove that the Naito-Sagaki-Saito crystal is the $B(\infty)$ crystal. In particular, our geometric proof works in the previously unknown case of $\asl_2$.
\end{abstract}

\maketitle


\section{Introduction}

\subsection{Crystals} Let $\g$ be a Kac-Moody Lie algebra. Kashiwara invented the notion of a $\g$-\emph{crystal}, which is a combinatorial analogue of a $\g$ representation. A crystal consists of a set $\mathbf{B}$ along with crystal operators and some auxiliary data (we will review the precise definition in section \ref{ReviewOfCrystals}). A rich source of crystals comes from actual representations:  given an integrable representation in the BGG category $\O$ or a Verma module, we can canonically extract a crystal using Kashiwara's method of \emph{crystal bases}. 

Let us give labels to two crystals that will be of interest to us. Using crystal bases, define $B(\lambda)$ to be the crystal associated to the irreducible representation $V(\lambda)$ of highest weight $\lambda$, and define $B(\infty)$ to be the crystal associated to the  Verma module of highest weight zero (equivalently, the negative part of the universal enveloping algebra). By general theory of crystals, there is a recipe for recovering the various $B(\lambda)$ crystals from the crystal $B(\infty)$, and vice versa. Because of this equivalence, we will focus on the $B(\infty)$ crystal in our discussion.

An interesting problem is to find algebraic, geometric, and combinatorial realizations of the crystal $B(\infty)$. In the next subsection, we will review an algebraic realization of the crystal coming from Lusztig's canonical basis. In the subsequent subsection, we will review a geometric realization of the crystal coming from MV cycles in the affine Grassmannian. In the course of the discussion, we will explain how both realizations give rise to the same combinatorial realization of the $B(\infty)$ crystal via MV polytopes.

\subsection{Lusztig's Canonical Basis}

 One particularly nice realization of the $B(\infty)$ crystal comes from Lusztig's canonical basis \cite{LusCanonical}, which is a basis in the negative part of the quantum group enjoying many nice properties. When $\g$ is finite type, there is a natural parameterization of the canonical basis coming from the various Poincar\'e-Birkhoff-Witt (PBW) bases. An interesting question is to study the combinatorics that records how to pass between the various parameterizations of the canonical basis coming from the various PBW bases. Lusztig gave an explicit answer in simply-laced cases \cite{LusPiecewise}. Berenstein and Zelevinsky \cite{BZ} gave an answer in all types. In addition, they indicate how the reparameterization data can be arranged into a combinatorial gadget called an \emph{MV polytope} (we will explain this name, due to Kamnitzer, in the next subsection). In particular, they produce a bijection between the canonical basis and the set of MV polytopes. Finally, they give a combinatorial description of the crystal structure using only the data coming from MV polytopes.

\subsection{MV Cycles in the Finite Dimensional Case}

Let $G$ be a complex reductive group. The geometric Satake equivalence of Lusztig \cite{Lus}, Beilinson-Drinfeld \cite{BD}, Ginzburg \cite{Ginz}, and Mirkovic-Vilonen \cite{MV} relates the geometry of the affine Grassmannian with the representation theory of the dual group $G^\vee$. The most recent proof, due to Mirkovi\'c-Vilonen, provided even finer information; they gave an explicit basis for each irreducible representation of $G^\vee$ indexed by certain irreducible subvarieties of the affine Grassmannian of $G$. These irreducible subvarieties are called \emph{Mirkovi\'c-Vilonen (MV) cycles}, and they are highly structured.

\begin{Remark} 
MV cycles come in two flavors: there are those that correspond to basis vectors in irreducible representations, and there are those that correspond to basis vectors in Verma modules. In this paper we will focus almost exclusively on MV cycles corresponding to basis vectors in Verma modules. Unless we specify otherwise, we will mean this latter variety when we write ``MV cycles''.
\end{Remark}

\noindent Let us highlight some key results in the theory of MV cycles, which will be relevant to our later discussion:

\begin{itemize}
\item Braverman, Finkelberg, and Gaitsgory \cite{BravGait,BFG} proved that the MV cycles corresponding to basis vectors in irreducible representations carry a natural crystal structure for the dual Lie algebra. More specifically, corresponding to each irreducible representation $V(\lambda)$ of $G^\vee$ with highest weight $\lambda$, they endow the set of MV cycles corresponding to $V(\lambda)$ with the structure of the crystal $B(\lambda)$.

As we discussed earlier, there is a natural way to extract a $B(\infty)$ crystal given a suitable family of crystals $\left\{B(\lambda)\right\}$. The geometric counterpart is exactly the process of passing from MV cycles corresponding to irreducible representations to MV cycles corresponding to Verma modules. 

\item Kamnitzer \cite{Kam}, following initial work of Anderson \cite{And}, studied MV cycles via their moment polytopes, which he calls MV polytopes. In particular, he gave an explicit combinatorial description of all MV polytopes. He discovered that MV polytopes are precisely the polytopes considered by Berenstein and Zelevinsky in their study of the canonical basis (hence the anachronistic name of the previous section). As a result, he obtained a natural bijection between MV cycles and the canonical basis using MV polytopes as an intermediary. Consequentially, the set of MV polytopes acquire two crystal structures: one arises from the Braverman-Finkelberg-Gaitsgory crystal structure on MV cycles, and the other comes from the crystal structure on Lusztig's canonical basis. In \cite{KamCrystal}, Kamnitzer proved that these two crystal structures in fact agree.

\item Baumann and Gaussent in \cite{baumanngaussent} gave explicit formulas for the crystal structure on MV cycles. They gave a recipe to construct a MV cycle from its crystal-theoretic string parameterization. They were able to reproduce many of Kamnitzer's results using their formula, and they compared the crystal structure on MV cycles with that coming from the theory of LS galleries. 

\item Hong in \cite{Hong} investigated how MV cycles and MV polytopes behave when you pass from $G$ to $G^\sigma$, the fixed point group of a Dynkin diagram automorphism. He noted that the Dynkin diagram automorphism also acts on MV cycles and MV polytopes, He proves that MV cycles and MV polytopes for $G^\sigma$ are canonically identified with those of $G$ that are fixed under the diagram automorphism.

\end{itemize}

\subsection{MV Cycles in the Affine Case}

Let us consider a general Kac-Moody group $G$. In this generality, there is currently no way to speak of the affine Grassmannian as a geometric object, and there is no geometric Satake equivalence. However,  because the representation theory of a general Kac-Moody group is formally very similar to that of a finite-dimensional reductive group, it is natural to expect that some of the geometric theory should generalize.

Indeed, we can define a substitute for MV cycles using spaces of maps from the projective line into flag schemes (we will call them \emph{map spaces} from now onward). When $G$ is finite-dimensional reductive group, we have an isomorphism between MV cycles and irreducible components of certain map spaces \cite{FM}. So for many purposes, it suffices to study the map spaces. The main benefit of the map spaces is that they can be defined for any symmetrizable Kac-Moody group. 

Thus the starting point for our discussion is replacing MV cycles with their map space analogues. Unfortunately, unless $G$ is finite dimensional or of untwisted affine type, there is little that we can say about the map space analogues of MV cycles. 

In this untwisted affine case, let us call the map space analogues of MV cycles \emph{double MV cycles}. Braverman-Finkelberg-Gaitsgory \cite{BFG} proved that double MV cycles are finite-dimensional schemes. They proved that all double MV cycles of a given weight  have the same dimension, and they compute that dimension to be exactly the number predicted from the study of ordinary MV cycles. With this computation, they were able to define a crystal structure on the set of double MV cycles in a way completely analogous to their construction when $G$ is finite dimensional. Moreover, they are able to prove that the resulting crystal is the $B(\infty)$ crystal for the dual Lie algebra.

We should mention at this point that Nakajima has a construction of MV cycles in the untwisted affine type A case \cite{Nak} via quiver varieties. His construction is the analogue of MV cycles corresponding to basis vectors in irreducible representations (in contrast to the Braverman-Finkelberg-Gaitsgory construction, which is the analogue of MV cycles corresponding to basis vectors in Verma modules).

\subsection{The Naito-Sagaki-Saito Crystal}

It is natural to ask what are the analogues of MV polytopes in the general Kac-Moody case. In the affine case, there is still the notion of Lusztig's canonical basis. However, the notion of a PBW parameterization is much more complicated because of the appearance of imaginary roots \cite{BCP,BN}, and the corresponding combinatorics is still unsolved.  

In the case of $\asl_n$,  Naito, Sagaki, and Saito \cite{NSS} develop an approach to MV polytopes as follows.  Consider the Dynkin diagram $A_\infty$, which is the type-A Dynkin diagram with nodes extending infinitely in both directions. They develop a reasonable candidate for $A_\infty$ MV polytopes, which they call $A_\infty$-Berenstein-Zelevinsky (BZ) data. They do this by noticing that the Dynkin diagram $A_\infty$ behaves in many ways like a finite type Dynkin diagram. In particular, all its finite subdiagrams are finite-type Dynkin diagrams. Using this observation, they are able to generalize the combinatorial characterization of MV polytopes in the finite-type case to the case of $A_\infty$.

They then observe that the Dynkin diagram for $\asl_n$ is obtained by ``folding'' the Dynkin diagram $A_\infty$ by an automorphism; namely, the automorphism is given by shifting the nodes of $A_\infty$ right by $n$ positions. This automorphism also acts naturally on the set of $A_\infty$-BZ data. Naito-Sagaki-Saito then consider the set of all $A_\infty$-BZ data fixed by this automorphism, and define crystal operators on this set in a natural way. Passing to a subset, they construct a crystal for $\asl_n$, which they prove is isomorphic to the $B(\infty)$-crystal when $n > 2$. They call the resulting crystal the set of ``affine Berenstein-Zelevinsky data''. In this paper, we will refer to their construction as the \emph{Naito-Sagaki-Saito (NSS) crystal}. Their methods are purely combinatorial, relying crucially on a result of Stembridge that characterizes the $B(\infty)$-crystal in the simply-laced affine case. In particular, they do not apply when $n=2$.

It is worth noting that the result of their construction is very similar in flavor to the results of \cite{Hong} that describe how finite-type MV polytopes behave under Dynkin diagram automorphisms.

\subsection{Main Results}

The first result of this paper is a generalization of the crystal operator formula of Baumann-Gaussent \cite{baumanngaussent} to the case of double MV cycles. This formula is valid in all untwisted affine cases. We hope that it will be useful for further investigations of double MV cycles.

The second result of this paper is an application of this formula to the case of type A double MV cycles. Here we show that we can extract the Naito-Sagaki-Saito crystal structure from double MV cycles using the action of the Kac-Moody group $\widehat{SL}_n$ on Fermionic Fock space. This gives rise to an explicit isomorphism of crystals between the set of double MV cycles and the Naito-Sagaki-Saito crystal. Using this isomorphism, we offer an independent proof of the fact that the Naito-Sagaki-Saito crystal structure gives rise to the $B(\infty)$ crystal. Furthermore, our proof includes the case of $n=2$, which is new.

\subsection{Heuristic Motivation} As we mentioned before, Naito-Sagaki-Saito's construction gives a combinatorial generalization of the work of Hong \cite{Hong} describing how MV polytopes behave under diagram automorphism. Our result is a partial geometric generalization of his work. 

Kamnitzer proved \cite{Kam} that MV cycles are determined by computing valuations when acting on extremal weight vectors of fundamental representations of $G$. Let us denote by $\widetilde{GL_\infty}$ the Kac-Moody group corresponding to the Dynkin diagram $A_\infty$. Since Fermionic Fock space is the direct sum of all fundamental representations of $\widetilde{GL_\infty}$, it is natural to expect that computing valuations on this representation should determine MV cycles for this group. Via the embedding $\widehat{GL}_n \subset \widetilde{GL_\infty}$ coming from fixed points of the diagram automorphism, following Hong, we should expect that MV cycles of $\widehat{GL}_n$ should be the fixed points for MV cycles of $\widetilde{GL_\infty}$ (this includes the case of MV cycles for $\widehat{SL}_n$, which should be a subset of those for $\widehat{GL}_n$). In particular, they should be determined by computing valuations on Fermionic Fock space. Unfortunately, we have no good candidate for MV cycles for $\widetilde{GL_\infty}$, so this argument is only heuristic. However, in this paper we do prove that MV cycles for $\widehat{SL}_n$ are determined by computing valuations on Fermionic Fock space, and that these valuations give rise to the Naito-Sagaki-Saito crystal structure. This is exactly what we should expect from Hong's results in the finite-dimensional case.

\subsection{Remarks on Open Problems} An open problem is to understand double MV cycles in all types. In particular, we would like to construct an analogue of MV polytopes for all affine types, i.e. we would like to extract from each double MV cycle a combinatorial gadget such that the crystal structure on double MV cycles corresponds to a combinatorial-defined crystal structure on the combinatorial gadgets.  An even more ambitious goal would be to connect any analogue of MV polytopes to the PBW parameterization of the affine canonical basis explained in \cite{BCP,BN}. 

If we follow Kamnitzer's results in the case of a finite-dimensional group, we should expect that double MV cycles for an affine Kac-Moody group $G$ should be determined by computing valuations when acting on extremal weight vectors of fundamental representations of $G$. However, this is not the case, even for $\widehat{SL}_2$. For $\widehat{SL}_2$, explicit computations indicate that only partial information is obtained by computing valuations on extremal weight vectors. That partial information seems to correspond to the ``real part'' of the PBW parameterization of the affine canonical basis. The missing part has to do with imaginary roots, and it is currently unclear how to recover this part using double MV cycles.

We should remark that the results of this paper in the case of $\widehat{SL}_n$ show that although computing valuations on extremal weight vectors of fundamental representations do not suffice to determine MV cycles, it does suffice to compute valuations on a larger set of vectors in fundamental representations. This is true because the action of  $\widehat{SL}_n$ on Fermionic Fock space decomposes as a direct sum of copies of fundamental representations. An open question is to see if the analogue of this statement is true in all types, i.e. are double MV cycles in all types determined by computing valuations on some set of vectors in fundamental representations? Based on our results in type A, we expect the answer to be yes. 

We hope that the results of this paper can come to bear on this problem in two different ways. First, our explicit formula for the crystal structure on double MV cycles applies in all untwisted affine types. We expect it to be useful when proving results about combinatorial data related to double MV cycles. Indeed, it is precisely the tool we used to prove the connection to the NSS crystal. 

Second, we can try to apply our understanding of the connection between double MV cycles and the NSS crystal to the PBW parameterization of the canonical basis. In principle, we have a bijection between the NSS crystal and the set of PBW parameterization data coming from transporting the crystal structure. Understanding this bijection explicitly should be an algebraic/combinatorial problem. If we can understand this bijection, we can compose it with our know bijection between double MV cycles and the NSS crystal. The result would be an explicit bijection between double MV cycles and PBW parameterization data. As we mentioned before, our explicit computations for $\widehat{SL}_2$ seem to indicate how this bijection will work for the ``real part'' of PBW parameterization data. 

Another open problem is to write double MV cycles of a given weight as a disjoint decomposition of locally closed subsets. Kamnitzer accomplishes this in the case of a finite-dimensional group \cite{Kam} after choosing a reduced decomposition for the longest element of the Weyl group. This choice of reduced decomposition corresponds exactly to the choice needed to construct the PBW basis. In the affine case, an analagous choice is needed to construct the PBW basis \cite{BCP,BN}. We hope that a construction of MV polytopes in the affine case, along with a bijection to the PBW parameterization of the canonical basis, will lead to a generalization of Kamnitzer's disjoint decomposition. Using such a decomposition, one could count finite-field-valued points in double MV cycles. Such a computation could lead to an alternate proof of the affine Gindikin-Karpelevich formula (c.f. \cite{BFK}) and perhaps explain its mysterious form. 

\subsection{Organization of the Paper}
In the second section, we review the definition of MV cycles, and we review the theory of quasimaps spaces. In the third section, we recall the Braverman-Finkelberg-Gaitsgory crystal structure. We prove a direct generalization of Baumann-Gaussents explicit formula for the crystal structure to the affine case.

For the remainder of the paper, we focus on the type-A affine case. In section four, we review the Fermionic Fock space and recall explicit formulas for the action of $\asl_n$ on this vector space. In section five, we rephrase the Naito-Sagaki-Saito crystal structure using the language of Maya diagrams. In section six, we show how this crystal structure arises from the action of $\widehat{SL}_n$ on Fermionic Fock space. We prove that this construction gives rise to an isomorphism of crystals between the set of double MV cycles and the Naito-Sagaki-Saito crystal. Using this isomorphism, we reprove that the Naito-Sagaki-Saito crystal structure is the $B(\infty)$-crystal.

\subsection{Acknowledgments}

I thank Michael Finkelberg, Joel Kamnitzer, and Peter Tingley for many useful conversations leading up to the completion of this work. I would like to particularly thank Joel Kamnitzer for pointing me to the work of Baumann and Gaussent, which proved to be the key tool in completing this work. I thank Peter Tingley for letting me modify his pictures of Maya diagrams and charged partitions from his expository notes on Fock space \cite{Tingley}. 

I would like to especially like to thank my advisor, Alexander Braverman, for his guidance and for numerous useful conversations throughout the course of this work. Without his steady patience and encouragement I would not have completed this work.

Finally, I thank the referee for useful comments and suggestions that have improved this paper.

Much of this work was completed during the semester program ``Langlands Duality in Representation Theory and Gauge Theory'' at the Institute for Advanced Studies at the Hebrew University of Jerusalem in the fall of 2010. I would like to thank the organizers of the program and the staff of IAS for their hospitality.

\section{Preliminary Notions}

\subsection{Terminology} We will work with schemes and ind-schemes over the complex numbers. For us, a {\bf subvariety} $V$ of a scheme $S$ will mean a locally closed subset of $S$ with the reduced scheme structure. In particular, a dense subvariety will always be an open subset of the original variety. 

\subsection{Kac-Moody Lie Algebras} We briefly review the theory of Kac-Moody Lie algebras (c.f. \cite{HongKang}). Let us fix a generalized Cartan matrix $A$, and let $I$ denote the set of nodes of the corresponding Dynkin diagram. We can form the associated  Kac-Moody Lie algebra $\g$. From $A$ we can canonically identify two dual lattices: the weight lattice $X^*\left(A\right)$ and the coweight lattice $X_*\left(A\right)$. We also have a set of simple coroots $\left\{\alpha_i\right\} \subset X_*\left(A\right)$ and simple roots $\left\{\check{\alpha}_i\right\} \subset X^*\left(A\right)$. Let $\Lambda \subset X_*\left(A\right)$ be the integral span of the simple coroots, and let $\Lambda^+ \subset \Lambda$ be the positive-integral span of the simple coroots.

The Lie algebra $\g$ is equipped with a natural triangular decomposition $\g = \n_- \oplus \ft \oplus \n_+$, where $\ft$ is the Cartan subalgebra, and $\n_+$ (resp. $\n_-$) is the direct sum of the positive (resp. negative) root spaces. As usual, $\n_+$ is generated as a Lie algebra by the Chevalley generators $\left\{ E_i\right\}_{i \in I}$. Similarly, we have Chevalley generators $\left\{F_i\right\}_{i \in I}$ for $\n_-$.  

We can also form the $\emph{formal}$ Kac-Moody Lie algebra $\overline{\g}$ by completing $\n_+$ with respect to the root grading. Explicitly, $\overline{\g} = \n_- \oplus \ft \oplus \overline{\n_+}$, where $\overline{\n_+}$ is the \emph{direct product} of the positive root spaces. From now onward, we will suppress the over-bars and write $\g$ and $\n_+$ when discussing both the minimal and formal Kac-Moody algebras. This will simplify the discussion by allowing uniform notation. 

\subsection{The Dual Lie Algebra} If $\g$ is the Kac-Moody Lie algebra corresponding to a generalized Cartan matrix $A$, then we define the {\bf dual Lie algebra} $\g^\vee$ to be the Kac-Moody Lie algebra corresponding to transpose generalized Cartan matrix $A^T$. We remark that $X^*\left(A^T\right) = X_*\left(A\right)$ and $X_*\left(A^T\right)
= X^*\left(A\right)$, i.e., the coweight and weight lattices are swapped from those of $A$. For much of this paper, we will focus on the case  $\g = \asl_n$, the untwisted affinization of $\mathfrak{sl}_n$. In this case, the generalized Cartan matrix is symmetric, and we see that $\asl_n$ is self-dual.

\subsection{Kac-Moody Groups}
We will write $G$ for the ``simply connected'' group corresponding to a minimal or formal Kac-Moody Lie algebra (c.f. \cite{Kumar}); we will refer to $G$ as either the minimal or formal Kac-Moody group. Let $N_-$, $T$ and $N_+$ be the subgroups corresponding to $\n_-$ , $\ft$ and $\n_+$ respectively. Let $B_+$ and $B_-$ be the subgroups corresponding to $\fb_- = \n_- \oplus \ft$ and $\fb_+ = \ft \oplus \n_+$ respectively. If we fix $J \subset I$, we can form the corresponding positive parabolic subalgebra $\fp^{J}_+$ by adjoining to $\fb_+$ all the negative root spaces corresponding the nodes in $J$ and taking Lie subalgebra they jointly generate. Let $P^J_+$ be the corresponding subgroup of $G$. Denote by $M^J$ the corresponding Levi factor. 

\begin{Remark}
In general, $G$ will have the structure of a group ind-scheme. When $A$ is a finite-type Cartan matrix, $G$ will be a finite dimensional complex reductive group. Furthermore, in the finite-type case, there is no difference between minimal and formal versions of Lie algebra and group.
\end{Remark}

\subsection{The Affine Grassmannian}

Let $\O = \C[[t]]$ be the ring of formal Taylor series in one variable, and let $\K = \C((t))$ be the field of formal Laurent series in one variable. Let $\C[t^{-1}]^+_k = \left\{ a_kt^{-k} + \cdots a_1t^{-1} | a_i \in \C \right\}$, which we view as a subset of $\K$. 

For any group ind-scheme $K$ (e.g. any of the groups we have defined above), we define the affine Grassmannian $\Gr_K = K(\K)/K(\O)$. In general, we can give $\Gr_K$ the structure of a set. However, when $K$ is a finite-dimensional algebraic group,  $\Gr_K$  can be naturally viewed as the $\C$-points of an ind-scheme of ind-finite type. In particular, when $K=G$ is a finite-type Kac-Moody group, we can speak of $\Gr_G$ as a geometric object. Unfortunately, for general Kac-Moody $G$ we do not currently have a good way of working with $\Gr_G$ as a geometric object. However, for the purposes of this paper we don't need the entire affine Grassmannian. Rather we only need certain subvarieties of $\Gr_G$ called Mirkovi\'c-Vilonen (MV) cycles. Fortunately, when $G$ is of untwisted affine type, we have a good geometric substitute for MV cycles coming from the theory of quasimap spaces.

\subsection{MV Cycles and Quasimap Spaces}

Let us fix a  Kac-Moody group $G$. 
To discuss MV cycles, we need to define certain subsets of $\Gr_G$.  We have a subset $T(\K)/T(\O) \subset G(\K)/G(\O)$, which is canonically identified with the coweight lattice $X_*(A) = \text{Hom}\left(T, \mathbb{G}_m\right)$. Let us denote $t^\lambda$ the point of $\Gr_G$ corresponding to the coweight $\lambda$. Consider the subgroups $N_+(\K)$ and $N_-(\K)$. We denote the {\bf positive semi-infinite cells} to be the orbits of coweight lattice under $N_+(\K)$, i.e., 
\begin{align*}
S^\lambda = N_+(\K) \cdot t^\lambda
\end{align*}
Similarly, we define the {\bf negative semi-infinite cells} to be the orbits of the coweight lattice under $N_-(\K)$.
\begin{align*}
T^\lambda = N_-(\K) \cdot t^\lambda
\end{align*}

It is easy to see that the $S^\lambda \cap S^\gamma = \emptyset $ and $T^\lambda \cap T^\gamma = \emptyset$ for $\lambda \neq \gamma$. Let us for a moment consider only finite-type $G$. In this case, these sets have the structure of ind-schemes of infinite dimension and infinite codimension in the affine Grassmannian. However, for all $\gamma, \lambda$, the intersection $S^\gamma \cap T^\lambda$ is a finite dimensional algebraic variety. Moreover the intersection $S^\gamma \cap T^\lambda = \emptyset$ unless $\gamma - \lambda \in \Lambda^+$. In this case, we can identify the irreducible components of $S^\gamma \cap T^\lambda$ with a basis in the $\lambda$-weight space of the Verma module with highest weight $\gamma$ \cite{FFKM}. We call these irreducible components {\bf MV cycles}. 

However, when $G$ is not finite type we cannot directly give these intersections the structure of an algebraic variety. However, we have a substitute in the form of quasimap spaces. Let us now consider only the case when G is a \emph{formal} Kac-Moody groups. Associated to $G$ is the Kashiwara flag scheme $\B$, which we can think of as the quotient $G/B_-$. When $G$ is finite type, this is just the flag variety of $G$. For general $G$, $\B$ still retains many of the geometric features of the finite dimensional flag variety. In particular, it has a Schubert cell decomposition. Let $U \subset \B$ denote the unique open Schubert cell, i.e. the ``big cell". Moreover, as in the finite dimensional case the second homology of $\B$ is naturally in bijection with $\Lambda$. In particular, any algebraic map $\phi : C \rightarrow \B$ from an algebraic curve into $\B$ has a well-defined degree, which we view as an element of $\Lambda^+$. Let us fix $\lambda \in \Lambda^+$, and  consider the following space (i.e. functor of points) $\oF{\lambda}$, defined in \cite{BFG}, that classifies maps $\phi : \mathbb{P}^1 \rightarrow \B$ satisfying the following conditions:

\begin{itemize}
  \item $\text{deg}\left(\phi\right) = \lambda$
  \item $\phi\left(\mathbb{P}^1 - 0\right) \subset U$
  \item $\phi\left(\infty\right) = 1_\B$ (the unit point in $\B$)
\end{itemize}

When $G$ is finite type, we have an isomorphism $\oF{\lambda} \simeq S^0 \cap T^{-\lambda}$ \cite{FM}. In particular, $\oF{\lambda}$ is a finite-dimensional scheme. When $G$ is of untwisted affine type, the authors of \cite{BFG} give a proof that $\oF{\lambda}$ is a finite-type, finite-dimensional scheme. Moreover, they prove that it is equidimensional, and they explicitly compute its dimension.

For general type, we always have the following set theoretic bijection \cite[Theorem 2.7]{BFK}:

\begin{align}
\oF{\lambda}\left(\C\right) \simeq S^0 \cap T^{-\lambda}
\end{align}

\noindent We will give details of this bijection in section \ref{BFKBijectionSection}.

Let $\cL = \bigsqcup_{\lambda \le 0} \Irr\left(\oF{\lambda}\right)$ denote the set of {\bf generalized MV cycles}. When $G$ is finite type, this corresponds exactly with our original notion of MV cycles. In the untwisted affine case, we call elements of $\Irr\left(\oF{\lambda}\right)$ {\bf double MV cycles} (because they should be isomorphic to the analogs of MV cycles in the yet-to-be-defined double affine Grassmannian).

The spaces $\oF{\lambda}$ have a natural closure $\F^\lambda$ called the quasimap closure. The closures $\F^\lambda$ are defined in \cite{BFG}. The exact definition will not matter for this paper as we will use them as an auxiliary tool to study the spaces $\oF{\lambda}$. In particular, we will use the fact that we have a canonical identification $\cL = \bigsqcup_{\lambda \le 0} \Irr\left(\F^\lambda\right)$ given by sending an irreducible component of $\oF{\lambda}$ to its closure in $\F^\lambda$.

\subsection{Review of Crystals}{\label{ReviewOfCrystals}} We will recall the definition of crystals. For later convenience, we will recall the definition of a crystal for the Lie algebra $\g^\vee$ dual to a given Kac-Moody Lie algebra $\g$ (c.f. \cite{BravGait}). A $\g^\vee${\bf -crystal} is a set $\mathbf{B}$ along with the following data:

\begin{itemize}
  \item A weight function $\wt : \mathbf{B} \rightarrow X^*(A)$
  \item For each $i \in I$, crystal operators $e_i,f_i :\mathbf{B} \rightarrow \mathbf{B} \sqcup \{0\}$
  \item For each $i \in I$, $i$-string functions $\varepsilon_i,\phi_i : \mathbf{B} \rightarrow \Z$
\end{itemize}

\noindent This data should satisfy the following axioms:

i) For all $\mathbf{b} \in \mathbf{B}$, $\phi_i(\mathbf{b}) = \varepsilon_i(\mathbf{b}) + \langle \wt\left(\mathbf{b}\right),\alpha_i\rangle$

ii) Let $\mathbf{b} \in \mathbf{B}$.  If $e_i \left(\mathbf{b}\right) \ne 0$ for some $i$, then

\begin{align*}
\wt\left(e_i\left(\mathbf{b}\right)\right)=\wt(\mathbf{b})+\check{\alpha}_i, \varepsilon_i\left(e_i\left(\mathbf{b}\right)\right) = \varepsilon_i\left(\mathbf{b}\right) - 1, \varphi_i\left(e_i\left(\mathbf{b}\right)\right) = \varphi_i\left(\mathbf{b}\right) + 1
\end{align*}

iii) Similarly, if $f_i \left(\mathbf{b}\right) \ne 0$ for some $i$, then 

\begin{align*}
\wt\left(f_i\left(\mathbf{b}\right)\right)=\wt(\mathbf{b})-\check{\alpha}_i, \varepsilon_i\left(f_i\left(\mathbf{b}\right)\right) = \varepsilon_i\left(\mathbf{b}\right) + 1, \varphi_i\left(f_i\left(\mathbf{b}\right)\right) = \varphi_i\left(\mathbf{b}\right) - 1
\end{align*}

iv) If $\mathbf{b}, \mathbf{b^\prime} \in \mathbf{B}$, then we have $\mathbf{b^\prime} = e_i \left(\mathbf{b}\right)$ if and only if $\mathbf{b} = f_i \left(\mathbf{b^\prime}\right)$.

\vspace{0.1in}

As we mentioned in the introduction, to every representation of $\g^\vee$ in the BGG category $\O$ we can canonically identify a $\g^\vee$-crystal (c.f. \cite{HongKang}). We define the $B(\infty)$ crystal to be the crystal associated to the Verma module with highest weight zero.

\section{The Braverman-Finkelberg-Gaitsgory Crystal Structure}

When $G$ is finite-type or an untwisted (formal) affine Kac-Moody group, we have the following theorem \cite{BFG} :

\begin{Theorem}
The set $\cL$ has the structure of the $B(\infty)$ crystal for the dual Lie algebra $\g^\vee$. 
\end{Theorem}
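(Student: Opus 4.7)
The plan is to follow the strategy of Braverman--Finkelberg--Gaitsgory, adapting the finite-type construction to the affine setting. The essential new geometric input, already established in \cite{BFG}, is that when $G$ is of untwisted affine type the quasimap space $\oF{\lambda}$ is a finite-type, finite-dimensional, equidimensional scheme of the expected dimension. Granted this, the geometric construction of the crystal data and the verification of the axioms proceed by a rank-one reduction almost verbatim as in the finite-type case. First I would define the weight function $\wt : \cL \to X_*(A) = X^*(A^T)$ by sending an irreducible component $Z \in \Irr(\oF{\lambda})$ to $-\lambda$; this is consistent with the set-theoretic bijection $\oF{\lambda}(\C) \simeq S^0 \cap T^{-\lambda}$ and with the expected $-\lambda$-weight-space interpretation for $\g^\vee$.

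To define the crystal operators $e_i,f_i$ and the string functions $\varepsilon_i,\varphi_i$, I would fix a simple root $i \in I$ and pass through the minimal parabolic $P_i$ with Levi $M_i$. There is a natural morphism from $\oF{\lambda}$ to a rank-one quasimap space (built from $M_i$ modulo the torus factors), and for a generic point $\phi$ of a component $Z \in \Irr(\oF{\lambda})$ one reads off an integer $\varepsilon_i(Z)$ measuring the generic $i$-th order of vanishing, i.e.\ the position along the corresponding $SL_2$-orbit. One then defines $e_i(Z)$ as the unique irreducible component of $\oF{\lambda-\alpha_i}$ whose generic point is obtained from a generic point of $Z$ by decreasing this $i$-valuation by one, and $f_i(Z)$ symmetrically. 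Set $\varphi_i(Z) = \varepsilon_i(Z) + \langle \wt(Z),\alpha_i\rangle$ so that axiom (i) is automatic. The remaining axioms (ii)--(iv) reduce to the $SL_2$ picture on the rank-one quasimap spaces, where the statement is classical.

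The remaining, and main, task is to identify the resulting crystal with $B(\infty)$ for $\g^\vee$. Here I would argue in two steps: (a) show that $\cL$ is connected, with unique highest-weight element the point component of $\oF{0}$ (which has weight $0$), and (b) invoke a uniqueness characterization of $B(\infty)$. For (a), starting from any $Z \in \Irr(\oF{\lambda})$ one must produce a chain of $e_i$'s leading to $\oF{0}$; this follows from the equidimensionality of $\oF{\lambda}$ and a descending induction on $|\lambda|$, using that some $\varepsilon_i(Z) > 0$ whenever $\lambda \neq 0$. For (b), the finite-dimensional Kashiwara uniqueness theorem for $B(\infty)$ extends to the Kac--Moody setting, so one needs only to exhibit an involution playing the role of Kashiwara's $\ast$ involution and verify its compatibility with the crystal operators; geometrically this involution should come from the natural opposite/duality operation on quasimaps (swapping $0$ and $\infty$, or intertwining $N_+$ and $N_-$ strata).

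The step I expect to be the main obstacle is (b), the comparison with $B(\infty)$: in the affine case one lacks the finite-dimensional Verma-module and PBW-basis arguments that make the finite-type identification straightforward, and verifying the axioms characterizing $B(\infty)$ purely geometrically requires careful bookkeeping on the strata of $\F^\lambda \setminus \oF{\lambda}$. By contrast, once equidimensionality of $\oF{\lambda}$ is in hand, the construction of the operators and the axiom check via $SL_2$-reduction should be essentially mechanical, since the $M_i$-action on the rank-one quasimap space is the same as in the finite-dimensional case.
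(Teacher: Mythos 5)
You should first be aware that the paper does not prove this theorem: it is quoted verbatim from \cite{BFG}, and the paper's own contribution begins afterwards (the explicit Baumann--Gaussent-type formula and the NSS application). So your proposal is in effect an attempt to reconstruct the Braverman--Finkelberg--Gaitsgory argument. As such, the first half of your sketch is in the right spirit and does match their construction: the weight function $\wt(Z)=-\lambda$ on $\Irr(\oF{\lambda})$, the rank-one reduction through the subminimal parabolic (in the paper's notation, the fibration $r_\mu:\Stot^\mu\rightarrow\Gr^\mu_{B(M)}$ and the decomposition $\F^\lambda=\bigsqcup_\mu \Stot^{\mu,\le\lambda}$), and the use of equidimensionality of $\oF{\lambda}$ to match top-dimensional components across the bijections --- this is exactly how the operators $e_i,f_i,\varepsilon_i,\varphi_i$ are defined, and the crystal axioms do reduce to the rank-one picture.

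The genuine gap is step (b), and it is not merely ``careful bookkeeping.'' Connectedness of the crystal together with a unique element of weight $0$ does not characterize $B(\infty)$, and the characterization that is actually available is not ``a crystal admitting a $*$-type involution'': it is the Kashiwara--Saito-type criterion, which requires a \emph{second}, independently constructed family of crystal operators $e_i^*,f_i^*$ on the same set, together with the verification of the specific numerical compatibilities between $\varepsilon_i$, $\varepsilon_j^*$, the weight function, and the two families (and their commutation for $i\neq j$). This is precisely the route taken in \cite{BFG}: they construct a \emph{pair} of crystal structures $(\tilde e_i,\tilde f_i,\dots)$ and $(\tilde e_i^*,\tilde f_i^*,\dots)$ on $\cL$ (the present paper works exclusively with the starred one) and check the criterion; building the second structure geometrically and proving the compatibilities is the bulk of the proof, and your sketch --- ``an involution coming from swapping $0$ and $\infty$, verify compatibility'' --- leaves essentially all of that content unsupplied, nor is it clear that such a swap even preserves the moduli problem, since the defect of a quasimap is pinned at $0$ while the value at $\infty$ is fixed to be $1_\B$. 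A smaller but real gap in (a): the claim that $\varepsilon_i(Z)>0$ for some $i$ whenever $\lambda\neq 0$ is a property of $B(\infty)$ that must itself be established geometrically (via the strata $\Stot^{\mu,\le\lambda}$); it does not follow from equidimensionality alone.
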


In fact, Braverman-Finkelberg-Gaitsgory define a pair of crystal structures $(\tilde{e}_i,\tilde{f}_i,\varepsilon_i,\phi_i,\wt)$ and $(\tilde{e}^*_i,\tilde{f}^*_i,\varepsilon^*_i,\phi^*_i,\wt)$ on the set $\cL$ that give it the structure of the $B(\infty)$ crystal in two different ways. In this paper, we will concern ourselves exclusively with the second crystal structure. To reduce clutter we will drop the stars and the tildes, and write $(e_i,f_i,\varepsilon_i,\phi_i,\wt)$ for what is denoted in \cite{BFG} as $(\tilde{e}^*_i,\tilde{f}^*_i,\varepsilon^*_i,\phi^*_i,\wt)$.

In this section, we will recall the definition of the BFG crystal structure and give explicit formulas for the crystal operators, directly generalizing a finite-dimensional result of Baumann and Gaussent.

When $G$ is finite type, $\Gr_G$ has a geometric structure, and we have the isomorphism of varieties $\oF{\lambda} \overset\sim\rightarrow S^0 \cap T^\lambda$. In section 13 of \cite{BFG}, the crystal structure is defined purely in the language of the affine Grassmannian. When $G$ is not finite type, we only have the quasimaps spaces, and section 14 of \cite{BFG} is a translation of the construction in the previous section to the language of quasimaps spaces. Thus sections 13 and 14 provide a dictionary between the affine Grassmannian and the quasimaps spaces for the purposes of the BFG crystal structure.

When $G$ is finite type, Baumann and Gaussent give a explicit algebraic formula for the crystal structure on MV cycles. Let us note that in Baumann-Gaussent's work, MV cycles are defined to be irreducible components of the closure $\overline{S^0 \cap T^\nu}$. Because $\Irr\left(S^0\cap T^\nu\right) = \Irr\left(\overline{S^0 \cap T^\nu}\right)$, we can easily translate their results to our match our convention. To be unambiguous, we will call irreducible components of $\overline{S^0 \cap T^\nu}$ {\bf closed MV cycles}. Baumann and Gaussent's result allow us to compute the BFG crystal operator explicitly in terms of multiplication by Chevalley subgroups.

\begin{Theorem}{\cite{baumanngaussent}}{\label{BaumannGaussent}}
Fix $i \in I$. Let $Z$ be a closed MV cycle. Let $Z^\prime = f^k_i (Z)$ be the result of applying the $f_i$ operator $k$ times to $Z$ (so that $Z^\prime$ is the corresponding \emph{closed} MV cycle). Then there exist dense locally closed subvarieties $\dot{Z} \subset Z$ and $\dot{Z}^\prime \subset Z^\prime$ such that the following holds:

The map $f:\C[t^{-1}]^+_k \times \dot{Z} \rightarrow \dot{Z}^\prime$ given by $f(p,z) = x_i\left(pt^{\phi_i(Z)}\right)z$ is well-defined and is a homeomorphism. 

Here, $x_i: \K \rightarrow N_+(\K)$ is the one-parameter Chevalley subgroup corresponding to the simple coroot $\alpha_i$, and the multiplication is the natural action of $N_+(\K)$ on the affine Grassmannian $\Gr_G$.

\end{Theorem}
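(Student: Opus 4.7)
The plan is to reduce the problem to an explicit calculation in the affine Grassmannian of the rank-one subgroup $G^{(i)} \cong \mathrm{SL}_2$ corresponding to the node $i$. The BFG crystal structure is defined by projecting MV cycles into the affine Grassmannian of the Levi $M^{\{i\}}$, and under this projection the integer $\phi_i(Z)$ should be readable as an extremal valuation: namely, the minimal $n$ such that $x_i(a\, t^{n})\, z \in Z$ for all $a \in \mathbb{C}$ and generic $z \in Z$, below which the $x_i$-action drops $z$ into a strictly lower MV cycle. I would begin by recording this geometric interpretation, which should follow from the BFG definitions together with a generic-slice argument: on a suitable dense open $\dot Z \subset Z$, every point has a well-defined ``$i$-image'' in $\Gr_{M^{\{i\}}}$, and these $i$-images share a common valuation equal to $\phi_i(Z)$.

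Next, I would carry out the computation directly. Using a Levi-type factorization $N_+(\K) = N^{(i)}_+(\K) \cdot N^{\mathrm{other}}_+(\K)$, any $z \in \dot Z$ admits, modulo $G(\O)$, a factored representative with controlled pole behavior at $t = 0$. Acting by $x_i(p\, t^{\phi_i(Z)})$ for $p \in \mathbb{C}[t^{-1}]^+_k$ only alters the $N^{(i)}$-factor, decreasing its leading order by at most $k$ and shifting the weight by $-k\alpha_i$. A dimension count together with the BFG description of $f_i$, applied $k$ times, then shows that the resulting point lies in $\dot Z' \subset f_i^k(Z) = Z'$ for a suitable dense open $\dot Z'$.

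Injectivity follows because distinct $p$ produce distinct $N^{(i)}$-components, and $\dot Z$ can be chosen small enough that this component determines the point. Surjectivity onto $\dot Z'$ is forced by equidimensionality: both source and target are irreducible of the same dimension $\dim Z + k = \dim Z'$, so the constructible image of the bijective algebraic map contains a dense open subset of $Z'$. The homeomorphism claim reduces to a standard argument, since the explicit inverse is obtained by reading off $p$ from the $N^{(i)}$-part of the image.

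The principal obstacle I anticipate is the intrinsic identification of $\phi_i(Z)$ with the geometric valuation threshold described above, and then showing that $x_i(p\, t^{\phi_i(Z)})\, z$ lands in the specific irreducible component $Z'$ rather than merely in the ambient cell $S^0 \cap T^{-\wt(Z) - k\alpha_i}$. This requires fine control on the moment polytopes of MV cycles and their behavior under the Chevalley $x_i$-action, essentially amounting to Kamnitzer's polytope description of how MV cycles are nested under $N_+(\K)$. Any honest proof must reconcile the purely crystal-theoretic definition of $f_i^k(Z)$ with the geometric operation of multiplication by $x_i(p\, t^{\phi_i(Z)})$, and this is where the bulk of the work lies.
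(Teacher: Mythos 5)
Your outline captures two genuine ingredients --- the reduction to a rank-one computation and the reading of $\phi_i(Z)$ as a valuation threshold --- but it misses the mechanism that makes the theorem essentially formal, and as a result the step you yourself flag as ``the bulk of the work'' is left unproved. The crystal operator on MV cycles used here is not a purely combinatorial datum that must afterwards be reconciled with the geometry: in the Braverman--Gaitsgory/BFG construction (the one Baumann--Gaussent work with, and the one this paper generalizes), one projects to the affine Grassmannian of the rank-one Levi --- in the paper's setting via the fibration $r_\mu : \Stot^{\mu} \rightarrow \Gr^\mu_{B(M)}$ --- and $f_i^k$ is \emph{defined} as the canonical bijection between top-dimensional irreducible components of $r_\mu^{-1}(D)$ and $r_\mu^{-1}(D^\prime)$, where $D = \Gr^\mu_{B(M)} \cap \overline{\Gr^{\lambda}_{B_-(M)}}$ and $D^\prime = \Gr^\mu_{B(M)} \cap \overline{\Gr^{\lambda-k\alpha_i}_{B_-(M)}}$, the bijection being induced by the transitive action of $N_+(M)(\K)\cong\K$ on the base. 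Consequently the proof consists of: (1) an explicit rank-one computation showing $g(p,d) = x_i\left(pt^{\phi_i(Z)}\right)\cdot d$ is a homeomorphism $\C[t^{-1}]^+_k \times D \rightarrow D^\prime$; (2) the observation that the action square over $g$ is Cartesian with $r_\mu$ faithfully flat, so the homeomorphism lifts; and (3) the fact that the lifted map is exactly the identification used to define the crystal operator, so it carries $\dot{Z}$ into $\dot{Z}^\prime$ \emph{by definition}. Your plan instead proposes to prove that the image lies in the specific component $Z^\prime$ by appealing to Kamnitzer's polytope description of how MV cycles sit under the $N_+(\K)$-action; that replaces a definition with a substantial external theorem, is never actually carried out, and would not transfer to the affine generalization (Theorem \ref{GeneralizationOfBaumannGaussentFormula}) where no polytope theory exists. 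This is the genuine gap: well-definedness of $f$ with target $\dot{Z}^\prime$ is asserted, not proved.

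Two smaller problems. Your injectivity argument (``distinct $p$ give distinct $N^{(i)}$-components, and $\dot{Z}$ can be chosen small enough that this component determines the point'') is incorrect as stated: the fibers of the projection to the Levi Grassmannian are positive-dimensional, so no dense open subset of $Z$ is determined by its $N^{(i)}$-part. Injectivity follows instead from injectivity of the base map $g$ together with the fact that each fixed element $x_i\left(pt^{\phi_i(Z)}\right)$ acts invertibly. Likewise, the dimension-count surjectivity and the ``read off $p$'' inverse at best give a bijection onto a dense constructible subset with no control of topologies; the full homeomorphism statement, including continuity of the inverse, comes in one stroke from the Cartesian square and flat base change, which is the route taken both by Baumann--Gaussent and by the paper.
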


\begin{Remark}
Baumann-Gaussent originally phrase their result in terms of the $B(-\infty)$-crystal. We have performed the obvious modification to rephrase their result for the $B(\infty)$-crystal. Also their theorem gives finer information than what we have given above. In particular, the subvarieties $\dot{Z} \subset Z$ and $\dot{Z}^\prime \subset Z^\prime$ have very explicit descriptions, which will become clear when we prove the generalization to the affine case. 
\end{Remark}

The goal of this section is to prove an analog of this result for double MV cycles. The proof will mostly follow the original proof of Baumann-Gaussent. However, their proof is written in the language of the affine Grassmannian, and we need to translate it to the language of quasimap spaces. Fortunately, sections 13 and 14 of \cite{BFG} provide a sufficient dictionary. 

\subsection{Recalling the BFG Construction}

We will assume familiarity with the notation and constructions of \cite{BFG}. Fix a node $i \in I$. Let $P=P^{\{i\}}$ be the corresponding positive subminimal parabolic subgroup of $G$, and let $M=M^{\{i\}}$ be the corresponding rank one Levi factor. Let $B_\pm(M)$ be the induced positive and negative Borel subgroups of $M$, and let $N_\pm(M)$ be the corresponding unipotent radicals. Let us fix $\mu \in \Lambda$. The authors construct an ind-scheme $\Stot^\mu$ with a natural action by the group $N_+(M)(\K)$ \cite[Section 14.8]{BFG}. The exact definition will not be relevant for this subsection, so we will delay discussing the details of the definition to the next subsection. 

Furthermore, the authors construct a fiber bundle $r_\mu:\Stot^\mu \rightarrow \Gr_{B(M)}^\mu$. This map $r_\mu$ is equivariant for an action of the group $N_+(M)(\K)$. Since $M$ is rank 1, we have an isomorphism $x_i:\K \overset\sim\rightarrow N_+(M)(\K)$ given by the one parameter subgroup corresponding to exponentiating the unique positive root space.

In summary we have the following diagram where the horizontal arrows are the action via $x_i$:

\begin{align}{\label{ActionDiagram}}
\begin{xy}
(0,20)*+{\K \times \Stot^\mu}="a"; (40,20)*+{\Stot^\mu}="b";%
(0,0)*+{\K \times Gr^\mu_{B(M)}}="c"; (40,0)*+{Gr^\mu_{B(M)}}="d";%
{\ar "a";"b"};
{\ar "a";"c"}?*!/_3mm/{r_\mu};
{\ar "b";"d"}?*!/_3mm/{r_\mu};%
{\ar "c";"d"};%
\end{xy}
\end{align}

Because of the group $\K$ is connected and acts transitively on the base, we can canonically identify the irreducible components of any two fibers using the group action. In fact, if $X \subset \Gr_{B(M)}^\mu$ is any irreducible subvariety, we can canonically identify the irreducible components of $r_\mu^{-1}(X)$ with the irreducible components of any fiber using the group action.

In particular, let $\lambda \in \Lambda^+$, and let $D = \Gr_{B(M)}^\mu \cap \overline{Gr_{B_-(M)}^\lambda}$. Because $M$ is rank one, $D$ is irreducible. Define $\Stot^{\mu,\leq \lambda} = r_\mu^{-1}(D)$. By the discussion in \cite{BFG}, we can identify $\Stot^{\mu,\leq \lambda}$ with a locally closed subset of $\F^\lambda$.  Moreover, these subsets are disjoint for different choices of $\mu$, and we have the following decomposition:

\begin{align*}
\F^\lambda = \bigsqcup_{\mu} \Stot^{\mu,\leq \lambda}
\end{align*}
If we denote by $\Irr^\text{top}\left(\Stot^{\mu,\leq \lambda}\right)$ the set of irreducible components of $\Stot^{\mu, \leq \lambda}$ whose dimension is the same as that of $\F^\lambda$, then we have the following decomposition: 
\begin{align*}
\Irr\left(\F^\lambda\right) = \bigsqcup_{\mu} \Irr^\text{top}\left(\Stot^{\mu,\leq \lambda}\right)
\end{align*}
And in particular, we can write 
\begin{align*}
\cL = \bigsqcup_{\lambda} \bigsqcup_{\mu} \Irr^\text{top}\left(\Stot^{\mu,\leq \lambda}\right)
\end{align*}
Using this decomposition, we can define the \cite{BFG} crystal structure as follows.

\begin{Definition}{\label{BFGCrystalDefinition}}
Let $i \in I$, and let $Z \in \cL$ be a double MV cycle. Then we define $f_i(Z)$ as follows. By the above discussion, $Z \in \Irr^{\text{top}}\left(\Stot^{\mu,\leq \lambda}\right)$ for a unique pair $\mu$ and $\lambda$. By the discussion about irreducible components of fibers of $r_\mu$, we have a canonical bijection $\Irr^{\text{top}}\left(\Stot^{\mu,\leq \lambda}\right) \cong \Irr^{\text{top}}\left(\Stot^{\mu,\leq \lambda-\alpha_i}\right)$. Define $f_i(Z)$ to be the image of $Z$ under this bijection. 

The operator $e_i$ is defined to be the standard partial inverse of $f_i$, i.e. $e_i(Z) = Z^\prime$ if there exists some (necessarily unique) $Z^\prime$ such that $Z = f_i(Z^\prime)$. Otherwise $e_i(Z) = 0$. The auxiliary data of $\epsilon_i, \phi_i, \wt$ is obviously defined.

\end{Definition}

With this definition, we can state the following theorem.

\begin{Theorem}{ \bf \cite{BFG}}
The set $\cL \cup \{0\}$ with the operators defined above form a $B(\infty)$ crystal for the dual Lie algebra $\g^\vee$.
\end{Theorem}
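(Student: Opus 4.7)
The plan is to establish this in two stages: first verify that the tuple $(e_i, f_i, \varepsilon_i, \phi_i, \wt)$ from Definition \ref{BFGCrystalDefinition} defines a crystal on $\cL \cup \{0\}$, and second identify that crystal with $B(\infty)$ for $\g^\vee$.

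For the first stage, the crystal axioms (i)--(iv) are essentially formal consequences of the geometric setup. The fibration $r_\mu : \Stot^\mu \to \Gr_{B(M)}^\mu$ is equivariant for the one-parameter group $x_i(\K) \cong N_+(M)(\K)$, and this group acts transitively on the base (which is a rank-one affine Grassmannian). Hence for any irreducible subvariety $X \subset \Gr_{B(M)}^\mu$, the irreducible components of $r_\mu^{-1}(X)$ are in canonical bijection with those of a single fiber. Applied to the nested inclusion $\overline{\Gr_{B_-(M)}^{\lambda-\alpha_i}} \subset \overline{\Gr_{B_-(M)}^{\lambda}}$ and taking top-dimensional components, one obtains the bijection defining $f_i$. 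The weight shifts in axioms (ii) and (iii) are read off from how the label $\lambda$ changes, since $\wt(Z) = -\lambda \in X_*(A) = X^*(A^T)$ and decreasing $\lambda$ by $\alpha_i$ is exactly subtracting the $i$-th simple root of $\g^\vee$. Axiom (iv) is automatic from the definition of $e_i$ as the partial inverse of $f_i$.

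For the second stage, I would apply the Kashiwara--Saito characterization of $B(\infty)$: a connected upper-normal $\g^\vee$-crystal is $B(\infty)$ provided it has a unique element $b_0$ of weight zero (then automatically a highest weight vector), and it admits a Kashiwara involution $*$ compatible with the crystal operators in the standard way. The candidate $b_0$ is the unique top-dimensional component of $\oF{0}$, which is a reduced point. Connectedness and uniqueness of $b_0$ follow from the stratification $\F^\lambda = \bigsqcup_\mu \Stot^{\mu, \le \lambda}$ together with the boundary structure of the quasimap closures $\F^\lambda$, in particular the fact that any component of $\oF{\lambda}$ has a well-defined $\mu$.

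The main obstacle is the construction of the involution $*$ and the verification of its compatibility with the crystal operators. This requires producing a second crystal structure, geometrically realized by the opposite parabolic construction (symmetrically swapping the roles of the positive and negative semi-infinite orbits), and showing it is interchanged with the given one by a geometric involution on $\cL$. In the finite-type case this is largely BFG's earlier work; in the untwisted affine case the subtlety is that the required equidimensionality of $\oF{\lambda}$ is itself the main geometric input of \cite{BFG}, and one must further reduce via each finite subdiagram $J \subset I$ to check rank-two compatibilities among the crystal operators. Once this is in place, an appeal to a characterization result --- Kashiwara--Saito in general, or Stembridge's axiomatic characterization in the simply-laced affine case --- completes the identification with $B(\infty)$.
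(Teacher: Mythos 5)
First, a point of comparison: the paper does not prove this statement at all --- it is quoted from \cite{BFG}, and the surrounding text only recalls the construction of the operators (Definition \ref{BFGCrystalDefinition}). So your attempt can only be measured against the argument in \cite{BFG} (and its finite-type predecessor \cite{BravGait}), not against an in-paper proof.

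Measured that way, your outline is the correct road map, but the decisive steps are named rather than carried out. Stage one is fine in spirit, except that the ``formal'' bijection $\Irr^{\text{top}}\left(\Stot^{\mu,\le\lambda}\right)\cong\Irr^{\text{top}}\left(\Stot^{\mu,\le\lambda-\alpha_i}\right)$ is only well defined on top-dimensional components because of the equidimensionality of $\oF{\lambda}$ and the dimension formula, which you correctly identify as the main geometric input of \cite{BFG} but then use as given; that is where the real geometry sits, not in the axioms (i)--(iv). Stage two does name the strategy actually used: \cite{BFG} produce a second, starred crystal structure (the paper itself records the pair $(\tilde e_i,\tilde f_i,\dots)$ and $(\tilde e^*_i,\tilde f^*_i,\dots)$) and verify a Kashiwara--Saito-type list of conditions relating the two structures, which characterizes $B(\infty)$. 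But in your text this is precisely the deferred part: the $*$-structure is not constructed, none of the compatibility conditions between the two structures are checked, and connectedness together with the uniqueness of the weight-zero element is only asserted to follow from the stratification. Since those verifications are the substance of the theorem, what you have is an accurate summary of the strategy with its core missing. One smaller correction: the fallback to Stembridge's characterization cannot replace the $*$-criterion for the general statement --- it requires simple-lacedness and excludes $\asl_2$ (indeed, circumventing Stembridge is one of the motivations of this paper) --- so the involution route is not optional.
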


Moreover, we can now state and prove the following theorem, which is a natural generalization of Theorem \ref{BaumannGaussent}.

\begin{Theorem}{ \bf Generalization of the Baumann-Gaussent Formula}{\label{GeneralizationOfBaumannGaussentFormula}}
Fix $i \in I$. Let $Z$ be a irreducible component of $\F^\lambda$, and let $Z^\prime = {f^k_i} (Z)$ be the result of applying the $f_i$ operator $k$ times to $Z$. By the above discussion, we can identify $Z$ with a irreducible component of $\Stot^{\mu,\leq \lambda}$ for a unique $\mu$. Let $\dot{Z} = Z \cap \Stot^{\mu,\leq \lambda}$, let $\dot{Z}^\prime = Z^\prime \cap \Stot^{\mu, \leq \lambda - k\alpha_i}$. Then we can define the following map, which is a homeomorphism:

$f:\C[t^{-1}]^+_k \times \dot{Z} \rightarrow \dot{Z}^\prime$, given by $f(p,z) = x_i(pt^{\phi_i(Z)})\cdot z$, where $\cdot$ denotes the restriction of the action defined above.
\end{Theorem}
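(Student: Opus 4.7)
The plan is to follow the strategy of Baumann--Gaussent from the finite-dimensional case, translating their affine-Grassmannian arguments into the quasimap language using the dictionary of \cite[Sections 13--14]{BFG}. The crucial structural input is the $\K$-equivariant fiber bundle $r_\mu : \Stot^\mu \to \Gr^\mu_{B(M)}$ of diagram (\ref{ActionDiagram}), with $N_+(M)(\K) \cong \K$ acting via $x_i$; this was already the basis for Definition \ref{BFGCrystalDefinition} of the crystal operators.

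First I would unpack the objects involved. Since $Z$ is the closure of an element of $\Irr^{\text{top}}\left(\Stot^{\mu, \le \lambda}\right)$ and $\Stot^{\mu,\le\lambda}=r_\mu^{-1}(D)$ with $D := \Gr^\mu_{B(M)}\cap\overline{\Gr^\lambda_{B_-(M)}}$ irreducible (because $M$ has rank one), $\dot Z$ is a dense locally closed subvariety of $Z$ and projects dominantly onto $D$ under $r_\mu$; an analogous description holds for $\dot Z'$ with $D' := \Gr^\mu_{B(M)}\cap\overline{\Gr^{\lambda-k\alpha_i}_{B_-(M)}}$. I would then reduce the theorem to a statement on the base: because $r_\mu$ is $\K$-equivariant and the $\K$-action identifies different fibers canonically, it suffices to show that the map
\begin{align*}
\C[t^{-1}]^+_k \times D \;\longrightarrow\; D', \qquad (p,d) \mapsto x_i(pt^{\phi_i(Z)})\cdot d
\end{align*}
is well defined and a homeomorphism onto a dense open subvariety. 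This claim lives purely inside the rank-one affine Grassmannian $\Gr_M$, where the one-parameter subgroup $x_i$ has an explicit $SL_2$ description and the relevant semi-infinite intersections admit the direct calculation that is the content of the original Baumann--Gaussent argument.

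With this base-level statement in hand, I would lift it to $\Stot^\mu$ using the fiber bundle structure: for $z \in \dot Z$ with $r_\mu(z)=d$, the point $x_i(pt^{\phi_i(Z)})\cdot z$ lies in the fiber over $x_i(pt^{\phi_i(Z)})\cdot d \in D'$, and varying $(p,z)$ over $\C[t^{-1}]^+_k \times \dot Z$ sweeps out a dense subset of $\dot Z'$. The inverse map sends $z' \in \dot Z'$ to the pair $(p,z)$ where $p$ is recovered from the valuation of the $x_i$-coordinate of $z'$ relative to $d := r_\mu(z')$, and $z := x_i(-pt^{\phi_i(Z)})\cdot z'$, exactly as in the original proof.

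The main obstacle is verifying that passing from the affine Grassmannian to the quasimap space $\Stot^\mu$ introduces no pathologies: one must check that $D$ and $D'$ remain irreducible of the expected dimension, that the map $r_\mu$ is a genuine fiber bundle, that the top-dimensional irreducible components of $\Stot^{\mu,\le\lambda}$ and $\Stot^{\mu,\le\lambda - k\alpha_i}$ are exchanged correctly by the $\K$-action, and that $\phi_i(Z)$ in the BFG crystal matches the integer appearing in the Baumann--Gaussent formula under this dictionary. All of these ingredients are supplied in the untwisted affine case by \cite[Sections 13--14]{BFG}, so the argument becomes a careful bookkeeping exercise in transporting the rank-one calculation through these identifications.
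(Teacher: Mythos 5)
Your proposal follows essentially the same route as the paper: reduce to the rank-one statement on the base $D \to D'$ supplied by Baumann--Gaussent's explicit calculation, then lift through the $N_+(M)(\K)$-equivariant fibration $r_\mu$ and identify the image component with $\dot{Z}'$ via the very identification used in Definition \ref{BFGCrystalDefinition}. The only difference is cosmetic: where you construct the inverse by hand fiberwise, the paper observes that the relevant square over $g$ is Cartesian with $r_\mu$ faithfully flat and invokes preservation of homeomorphisms under flat base change, which also yields surjectivity onto $\dot{Z}'$ (not merely a dense subset) directly.
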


\begin{proof}
This proof proceeds in direct parallel to the proof of \cite[Proposition 14]{baumanngaussent}.
Let 
\begin{align*}
D = \Gr_{B(M)}^\mu \cap \overline{\Gr_{B_-(M)}^\lambda} \text{, and } D^\prime = \Gr_{B(M)}^\mu \cap \overline{\Gr_{B_-(M)}^{\lambda-k\alpha_i}}
\end{align*}
Note that because $M$ has rank one, both $D$ and $D^\prime$ are irreducible.

Using an explicit rank-one calculation in Baumann-Gaussent, \cite[Propositions 8]{baumanngaussent}, we can conclude that the map $g: \C[t^{-1}]^+_k \times D \rightarrow D^\prime$ given by $g(p,d) = x_i(pt^\phi_i(Z))\cdot z$ is a homeomorphism. Restricting diagram (\ref{ActionDiagram}) to $g$, we get the following commutative diagram:

\begin{align*}
\begin{xy}
(0,20)*+{\C[t^{-1}]^+_k  \times \Stot^{\mu, \leq \lambda}}="a"; (40,20)*+{\Stot^{\mu, \leq \lambda - k \alpha_i}}="b";%
(0,0)*+{\C[t^{-1}]^+_k \times D}="c"; (40,0)*+{D^\prime}="d";%
{\ar "a";"b"}?*!/_3mm/{f};
{\ar "a";"c"}?*!/_3mm/{r_\mu};
{\ar "b";"d"}?*!/_3mm/{r_\mu};%
{\ar "c";"d"}?*!/_3mm/{g};%
\end{xy}
\end{align*}

Notice that this square is Cartesian and that the map $r_\mu$ is faithfully flat. Because the map $g$ is a homeomorphism, and because the property of being a homeomorphism is preserved under flat base change\cite[Proposition 2.6.2]{EGA4}, we conclude the map $f$ is also a homeomorphism. But the map $f$ is precisely how we identify $\Irr^{\text{top}}\left(\Stot^{\mu,\leq \lambda}\right) \overset\sim\rightarrow\Irr^{\text{top}}\left(\Stot^{\mu,\leq \lambda-\alpha_i}\right)$ when defining the crystal structure. So we conclude that the restricted map

\begin{align*}
f:\C[t^{-1}]^+_k \times \dot{Z} \rightarrow \dot{Z}^\prime
\end{align*}
is well-defined and a homeomorphism.
\end{proof}

\subsection {\label{BFKBijectionSection}}
{\bf The bijection $\oF{\lambda}\left(\C\right) \simeq S^0 \cap T^{-\lambda}$.}

In this section we will explain in detail how the bijection $\oF{\lambda}\left(\C\right) \simeq S^0 \cap T^{-\lambda}$ works. In addition, we will prove a certain equivariance property that will be required for the remainder of the paper. This section will require manipulations of various moduli spaces from \cite{BFG} that will not appear later in the paper. We will freely use results from this paper, giving precise citations when we do. 

Recall that $\oF{\lambda}$ is defined as the moduli space of maps $\phi : \mathbb{P}^1 \rightarrow \B$ satisfying the following conditions:

\begin{itemize}
  \item $\text{deg}\left(\phi\right) = \lambda$
  \item $\phi\left(\mathbb{P}^1 - 0\right) \subset U$
  \item $\phi\left(\infty\right) = 1_\B$ (the unit point in $\B$)
\end{itemize}

Here $\B$ is the Kashiwara flag scheme, and $U$ is the open Schubert cell. By the Bruhat decompostion, we have a canonical bijection $U \simeq N$, where $N$ is the positive unipotent part of our formal Kac-Moody group $G$.

Thus we have the following sequence of morphisms:

\begin{align*}
\oF{\lambda} \rightarrow \text{ Maps}(\mathbb{P}^1 - 0 ,N) \rightarrow \text{ Maps}( \text{Spec } \K,N) 
\end{align*}

Here the first map is given by restriction of a map $\phi : \mathbb{P}^1 \rightarrow \B$ to a map $\mathbb{P}^1 -0 \rightarrow U \simeq N$, and the second map is given by restriction to the formal punctured disk around $0$ in $\mathbb{P}^1 - 0$, which we identify with $\text{Spec } \K$. Note that the we consider the second two spaces only as moduli functors and do not concern ourselves with issues of representability of these functors. 

Passing to $\C$-points, we get a map $\oF{\lambda}\left(\C\right) \rightarrow N(\K)$. Composing with the quotient $N(\K) \rightarrow N(\K)/N(\O) \simeq S^0$, we get a map $\oF{\lambda}\left(\C\right) \rightarrow S^0$. We have the following theorem, due to Braverman-Finkelberg-Kazhdan

\begin{Theorem}{\bf \cite[Theorem 2.7]{BFK}}{\label{BFKBijection}}
The image of the map $\oF{\lambda}\left(\C\right) \rightarrow S^0$ is contained in $S^0 \cap T^{-\lambda}$, and the resulting map $\oF{\lambda}\left(\C\right) \rightarrow S^0 \cap T^{-\lambda}$ is a bijection.

\end{Theorem}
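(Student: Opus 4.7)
The plan is to address the two assertions in sequence: that the image of $\oF{\lambda}(\C)$ inside $S^0$ is contained in $T^{-\lambda}$, and that the resulting map $\oF{\lambda}(\C) \to S^0 \cap T^{-\lambda}$ is a bijection.

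\textbf{Image lies in $T^{-\lambda}$.} Given $\phi \in \oF{\lambda}(\C)$, let $n \in N(\K)$ be the germ at $0$ of $\phi|_{\POne - 0} : \POne - 0 \to U \simeq N$, and let $[n] \in \Gr_G$ be its image in the affine Grassmannian. A Birkhoff-type decomposition $G(\K) = \bigsqcup_\mu N_-(\K)\, t^\mu\, G(\O)$ identifies the condition $[n]\in T^\mu$ with $n \in N_-(\K)\, t^\mu\, G(\O)$. To determine $\mu$, I would compare two trivializations of the pullback bundle near $0$: the $U$-trivialization coming from $\phi(\POne-0)\subset U$, and a local $B_-$-trivialization obtained from the Schubert structure of $\B$ near $\phi(0)$. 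The transition cocycle lies in $B_-(\K)$, and its $T(\K)$-component $t^\mu$ records the degree of $\phi$ via pairing with the fundamental weight line bundles on $\B$; the assumption $\deg(\phi) = \lambda$ forces $\mu = -\lambda$.

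\textbf{Injectivity.} Suppose $\phi_1, \phi_2 \in \oF{\lambda}(\C)$ have germs $n_1, n_2 \in N(\K)$ that agree modulo $N(\O)$. Identifying $U \simeq N$ as a group, the pointwise product $\psi = \phi_1\cdot \phi_2^{-1} : \POne - 0 \to N$ has germ at $0$ in $N(\O)$, so it extends across $0$ to a morphism $\POne \to N$. Since $\POne$ is proper and $N$ is ind-affine, any such morphism factors through a finite-dimensional closed affine subscheme and is therefore constant; as $\psi(\infty) = 1_N$, we conclude $\psi \equiv 1$ and hence $\phi_1 = \phi_2$.

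\textbf{Surjectivity.} Given $[n]\in S^0 \cap T^{-\lambda}$, I would first choose a representative $n \in N(\K)$ that comes from a morphism $\AOne = \POne - 0 \to N$ with value $1_N$ at $\infty$; this is possible after modifying $n$ by $N(\O)$, using the pro-unipotent structure of $N$ to absorb the regular part of a Laurent expansion. Composing with $N \hookrightarrow \B$ yields a morphism $\psi : \POne - 0 \to \B$ sending $\infty$ to $1_\B$. To extend $\psi$ across $0$, use the assumption $[n]\in T^{-\lambda}$ to obtain a decomposition $n = n_- \cdot t^{-\lambda}\cdot k$ with $n_-\in N_-(\K)$ and $k \in G(\O)$. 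The element $n_- t^{-\lambda} \in G(\K)$ defines, on the formal disk at $0$, a morphism into $G/B_- = \B$; this data glues with $\psi$ away from $0$ to produce a morphism $\phi : \POne \to \B$ with $\phi(\POne - 0)\subset U$, $\phi(\infty) = 1_\B$, and $\deg(\phi) = \lambda$, the degree being detected precisely by the $t^{-\lambda}$ factor in the transition function.

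The principal obstacle is surjectivity: one must verify that the local $B_-$-extension across $0$ glues with the $N$-trivialization on $\POne - 0$ to produce an honest algebraic morphism into $\B$ of the prescribed degree. Here the infinite-dimensional nature of $\B$ in the Kac-Moody setting creates subtleties absent in finite type, and careful use of the pro-structures on $N(\O)$ and $G(\O)$, together with the explicit Schubert-cell description of $\B$, is required.
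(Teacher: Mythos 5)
Note first that the paper does not prove this statement at all: it is quoted from \cite[Theorem 2.7]{BFK}, and the surrounding text only constructs the map $\oF{\lambda}(\C)\to S^0$. So the relevant comparison is with the cited Braverman--Finkelberg--Kazhdan argument, which runs through Pl\"ucker coordinates (matrix coefficients against highest weight vectors of the integrable modules $V_{\check{\nu}}$) and the notion of ``good'' elements of $N(\K)$. Measured against that, your sketch has a genuine gap: it repeatedly presupposes decompositions that are false, or are precisely what must be proven, for Kac--Moody $G$. For affine $G$ the semi-infinite cells $T^\mu$ do \emph{not} cover $\Gr_G$; the paper itself remarks that $\bigsqcup_\lambda S^0\cap T^{-\lambda}$ is strictly smaller than $S^0$ and coincides with the image of the ``good'' elements of $N(\K)$ in the sense of \cite[Lemma 2.5]{BFK}. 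Hence the ``Birkhoff-type decomposition $G(\K)=\bigsqcup_\mu N_-(\K)t^\mu G(\O)$'' you invoke in the containment step is unavailable: showing that the germ at $0$ of a based degree-$\lambda$ map lies in $N_-(\K)t^{-\lambda}G(\O)$ at all (goodness, with the correct coweight) is the substantive half of the theorem, and in \cite{BFK} it is extracted from the conditions $\phi(\POne-0)\subset U$ and $\deg\phi=\lambda$ by computing the valuations $-\langle\lambda,\check{\nu}\rangle$ of the Pl\"ucker coordinates, not from a local $B_-$-trivialization whose existence near $\phi(0)$ you have not justified.

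The same issue undermines surjectivity. Your first move --- modifying $n$ by $N(\O)$ so that it becomes the germ of a based polynomial map $\AOne\to N$ --- cannot hold for arbitrary points of $S^0$ (it would force the image to be all of $S^0$, contradicting the strict inclusion above), so the hypothesis $[n]\in T^{-\lambda}$ must enter exactly there, and your sketch does not say how. The subsequent step, gluing the disk datum $n_-t^{-\lambda}$ with the $N$-trivialization on $\POne-0$ into an honest morphism $\POne\to\B$ of degree $\lambda$, is the heart of the matter, as you acknowledge; since $\B$ is not proper in this setting there is no valuative-criterion extension, and the actual argument produces the line subbundles (Pl\"ucker data) inside the associated $\cV^{\check{\nu}}$-bundles over all of $\POne$ and uses the exact valuation $-\langle\lambda,\check{\nu}\rangle$ to rule out a defect at $0$, i.e.\ to get a map rather than a quasimap. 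Your injectivity argument is essentially sound, with two small repairs: work with $\psi=\phi_2^{-1}\phi_1$ (whose germ is $n_2^{-1}n_1$) rather than $\phi_1\phi_2^{-1}$, and justify $N(\K)\cap G(\O)=N(\O)$ (again checked on integrable representations); after that, a morphism $\POne-0\to N$ whose coordinates are polynomials in $t^{-1}$ and which is regular at $t=0$ is constant, as you say.
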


For the remainder of this subsection, we will discuss how this bijection interacts with the formula from Theorem \ref{GeneralizationOfBaumannGaussentFormula}. In particular, we will be able to compute the crystal stucture on the $\C$-points of double MV cycles without having to explicitly mention the intricate geometry involved.

Recall the $N_+(M)(\K)$-equivariant fiber bundle $r_\mu:\Stot^\mu \rightarrow \Gr_{B(M)}^\mu$ used in defining the crystal structure on double MV cycles. Let $\overset{\circ} D = \Gr_{B(M)}^\mu \cap Gr_{B_-(M)}^\lambda$. Define $\Stot^{\mu,\lambda} = r_\mu^{-1}\left(\overset{\circ}D\right)$. Then we have the following decomposition into locally closed subsets (c.f the proof of \cite[Proposition 15.2]{BFG}):

\begin{align*}
\oF{\lambda} = \bigsqcup_{\mu} \Stot^{\mu,\lambda}
\end{align*}

\noindent Taking the union over all $\lambda$, we have

\begin{align*}
\bigsqcup_{\lambda} \oF{\lambda} = \bigsqcup_{\lambda} \bigsqcup_{\mu} \Stot^{\mu,\lambda} = \bigsqcup_{\mu} \Stot^\mu
\end{align*}

\noindent Passing to $\C$-points and using the Braverman-Finkelberg-Kazhdan bijection, we have a bijection $\bigsqcup_{\lambda} S^0 \cap T^{-\lambda} = \bigsqcup_{\mu} \Stot^\mu(\C)$. 

Notice that $\bigsqcup_{\lambda} S^0 \cap T^{-\lambda}$ is strictly smaller than $S^0$. In the terminology of \cite[Lemma 2.5]{BFK}, it is precisely the image of  ``good" elements of $N(\K)$ inside of of $S^0$. However, it is easy to see that $\bigsqcup_{\lambda} S^0 \cap T^{-\lambda}$ is stable under the action of $N_+(M)(\K)$ (essentially because $N_+(M)(\K)$ preserves the ``good" elements of $G(\K)$).

Thus we have a bijection $\bigsqcup_{\lambda} S^0 \cap T^{-\lambda} = \bigsqcup_{\mu} \Stot^\mu$ and actions of $N_+(M)(\K)$ on both sides of this bijection. We can now state the following proposition.

\begin{Proposition}{\label{EquivarianceProposition}} The bijection of sets $\bigsqcup_{\lambda} S^0 \cap T^{-\lambda} \rightarrow \bigsqcup_{\mu} \Stot^\mu$ is $N_+(M)(\K)$-equivariant.
\end{Proposition}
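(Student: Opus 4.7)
The plan is to show that both $N_+(M)(\K)$-actions are restrictions of a common natural $N_+(\K)$-action on the quasimaps side (equivalently, on $N(\K)$ after moduli-theoretic restriction), so that the bijection $\phi \mapsto [n_\phi]$ is tautologically equivariant.

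First I would spell out the $N_+(\K)$-action on $\bigsqcup_\lambda \oF{\lambda}$. Using the moduli description of the bijection above, a quasimap $\phi$ is encoded by the element $n_\phi \in N(\K)$ obtained by restricting $\phi|_{\mathbb{P}^1 - 0} : \mathbb{P}^1 - 0 \to U \simeq N$ to the formal punctured disk around $0$. For any $g \in N_+(\K)$, left multiplication in $N(\K)$ defines $g \cdot n_\phi$, and this is realized on the quasimap side by post-composing $\phi|_{\mathbb{P}^1-0}$ with left translation by $g$ inside $U \simeq N$. Under the further quotient $N(\K) \to N(\K)/N(\O) \simeq S^0$, this is visibly the same as the standard left-multiplication action of $N_+(\K)$ on $\Gr_G$ restricted to $S^0$. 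Therefore, for this global $N_+(\K)$-action, the assignment $\phi \mapsto [n_\phi]$ is equivariant; restricting to the subset $\bigsqcup_\lambda S^0 \cap T^{-\lambda}$ (which is stable under $N_+(M)(\K)$ as already observed) preserves equivariance.

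Second, I would verify that the BFG action of $N_+(M)(\K)$ on $\Stot^\mu$ used in Diagram~\ref{ActionDiagram} is precisely the restriction of this global $N_+(\K)$-action to $N_+(M)(\K) \subset N_+(\K)$. This amounts to unwinding the construction in \cite[Section 14.8]{BFG}: the fiber bundle $r_\mu : \Stot^\mu \to \Gr_{B(M)}^\mu$ is built at the level of moduli from data that includes the restriction of $\phi$ to the formal disk, and the one-parameter subgroup $x_i : \K \overset\sim\to N_+(M)(\K)$ is made to act by the same left-translation operation in $U \simeq N$. Once this identification is made explicit, combining with the previous step gives that the bijection $\bigsqcup_\lambda \oF{\lambda}(\C) = \bigsqcup_\mu \Stot^\mu(\C) \to \bigsqcup_\lambda S^0 \cap T^{-\lambda}$ intertwines the two $N_+(M)(\K)$-actions.

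The main obstacle is this second step: matching BFG's intrinsic definition of the $N_+(M)(\K)$-action on $\Stot^\mu$ with the natural left-multiplication action on the quasimap moduli. The geometry of $\Stot^\mu$ is defined in terms of an auxiliary reduction of structure to the Levi $M$, so one must verify carefully that the action induced via $r_\mu$ agrees with restriction of the tautological $N_+(\K)$-action, rather than some twist of it. Once this compatibility is established, the proposition reduces to the trivial observation that left multiplication in $N(\K)$ induces compatible actions on $N(\K)/N(\O) \simeq S^0$ and on maps $\mathbb{P}^1 - 0 \to U \simeq N$, and the equivariance of the bijection then follows immediately.
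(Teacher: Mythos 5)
Your overall strategy---realizing both actions as ``the same'' change of trivialization / left multiplication near $0$---is indeed the right idea, and it is the idea behind the paper's own argument; but as written your proposal has a genuine gap: the entire content of the proposition is concentrated in your second step, and you do not carry it out. Saying that matching BFG's action on $\Stot^\mu$ with left multiplication ``amounts to unwinding the construction in \cite[Section 14.8]{BFG}'' and continuing with ``once this identification is made explicit\dots'' defers exactly the verification the proposition asserts. The missing argument is the following unwinding: pass to the Zastava description of $\oF{\lambda}$ (\cite[Proposition 2.21]{BFG}); use the canonical trivialization of the $N$-bundle $\F_N$ away from $0$ (\cite[Lemma 2.13]{BFG}) to see that the BFK bijection is ``restrict the trivialization to the formal punctured disk, obtaining a point of $N(\K)/N(\O)$''; invoke \cite[Proposition 14.3]{BFG} to see that the $N_+(M)(\K)$-action on $\Stot^\mu$ is a change of trivialization of the $M$-bundle $\F_M$ on the punctured formal disk; and then observe that, since this action fixes the maps $\cV^{\check{\nu}}_{\F_B}\to \cU^{\check{\nu}}_{\F_M}$, the trivialization of $\F_B$ (hence of its canonical $N$-reduction, which is identified with $\F_N$ under $\oF{\lambda}=\bigsqcup_\mu \Stot^{\mu,\lambda}$) changes by the same element. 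Without some version of this, you have only restated the claim, including your own worry that the induced action might be ``a twist'' of the expected one.

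Your first step is also imprecise in ways that matter here. An element $g\in N_+(\K)$ is a $\K$-point of $N$, so ``post-composing $\phi|_{\POne-0}$ with left translation by $g$ inside $U\simeq N$'' is not defined; the action can only be described after restricting to the formal punctured disk, i.e.\ on trivialization/gluing data, and it is then not clear (nor is it how BFG define their action) that the result is again represented by an honest quasimap of some degree. Relatedly, there is no ``tautological $N_+(\K)$-action'' on $\bigsqcup_\lambda \oF{\lambda}(\C)$ to restrict: the set $\bigsqcup_\lambda S^0\cap T^{-\lambda}$ is strictly smaller than $S^0$ (it is the image of the ``good'' elements of $N(\K)$) and is only known to be stable under $N_+(M)(\K)$, which is precisely why the statement is made for that subgroup. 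So the correct formulation is not ``the BFG action is the restriction of a global $N_+(\K)$-action,'' but the bundle-theoretic identification of the two $N_+(M)(\K)$-actions sketched above.
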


\begin{proof}
The proof requires nothing more than unwinding the various lengthy definitions made in \cite{BFG}.

First, we will need to use an alternate characterization of $\oF{\lambda}$ as a closed subscheme of a {\bf Zastava space}. Let $\AOne \subset \POne$ be the complement of $\infty$ inside $\POne$. Let $\lambda \in  \Lambda^+$. Then the Zastava space $\cZ^\lambda(\AOne)$ is the moduli of triples $(D^\theta,\mathcal{F}_N,\kappa)$ where $D^\theta$ is a ``colored divisor" of degree $\lambda$ (c.f. \cite[Section 2.3]{BFG}), $\mathcal{F}_N$ is an $N$ bundle on $\POne$, and $\kappa$ is a ``Pl\"ucker datum", which together satisfy a list of conditions detailed in \cite[Section 2.12]{BFG} (the exact conditions are not relevant for our discussion).

Of primary importance to us, is the open subscheme $\overset{\circ}{\cZ^\lambda}(\AOne) \subset \cZ^\lambda(\AOne)$, corresponding to the condition that the Pl\"ucker datum $\kappa$ consist of injective bundle maps. By \cite[Proposition 2.21]{BFG} the closed subscheme of $\overset{\circ}{\cZ^\lambda}(\AOne)$ consisting of those triples $(D^\theta,\mathcal{F}_N,\kappa)$ where $D^\theta$ is supported at $0$ is naturally isomorphic to $\oF{\lambda}$. Hence, for the remainder of this subsection, we will use this Zastava definition to work with  $\oF{\lambda}$

From the Zastava perspective, we can also reinterpret the bijection $\oF{\lambda}\left(\C\right) \simeq S^0 \cap T^{-\lambda}$. By \cite[Lemma 2.13]{BFG}, the $N$-bundle $\mathfrak{F}_N$ is canonically trivialized away from $0$. Restricting to the formal disk around $0$, we get a $N$ bundle on a formal disk trivialized on the punctured disk, which gives us a point of $N(\K)/N(\O)$. Thus we see that the action of $N_+(M)(\K)$ on $\bigsqcup_{\lambda} \oF{\lambda}(\C)$ corresponds to changing the trivialization of the $N$ bundle $\mathfrak{F}_N$ in a formal punctured disk centered at 0.

Let us now shift attention to the action of $N_+(M)(\K)$ on $\Stot^\mu$. The space  $\Stot^\mu$ is defined to be the moduli space of the following data:

\begin{itemize}

\item
A $B$-bundle $\F_B$ on $\AOne$, such that the induced $T$-bundle $\F^0_T$ is
trivialized, (in particular, the $B$ bundle has a canonical reduction to $N$, which we call $\F_N$)

\item
An $M$-bundle $\F_M$ on $\AOne$,

\item
Regular bundle maps
$\cV^{\check{\nu}}_{\F_B}\to \cU^{\check{\nu}}_{\F_M}$, and

\item
Meromorphic maps (regular away from $0$)
$\cU^{\check{\nu}}_{\F_M}\to \cL^{\check{\nu}}_{\F^0_T},$

\end{itemize}

\noindent such that

\begin{itemize}

\item 
The maps $\cU^{\check{\nu}}_{\F_M}\to \cL^{\check{\nu}}_{\F^0_T},$ form a Pl\"ucker datum for $\F_M$, and the compositions $\cV^{\check{\nu}}_{\F_B}\to \cU^{\check{\nu}}_{\F_M}\to \cL^{\check{\nu}}_{\F^0_T}$ form a Pl\"ucker datum for $\F_B$ 

\item
The compositions
$\cL^{\check{\nu}}_{\F^0_T}\to \cV^{\check{\nu}}_{\F_B}\to
\cU^{\check{\nu}}_{\F_M}\to
\cL^{\check{\nu}}_{\F^0_T}$ are the identity maps, and

\item
The induced maps
$\cL^{\check{\nu}}_{\F^0_T}\to
\cU^{\check{\nu}}_{\F_M}(\langle \mu,\check{\nu}\rangle\cdot 0)$ are regular bundle maps. 
\end{itemize}

\noindent Here $\check{\nu}$ varies through all dominant weights, and $\cV^{\check{\nu}}_{\F_B}$,$\cU^{\check{\nu}}_{\F_M}$, $\cL^{\check{\nu}}_{\F^0_T}$ are the associated bundles with fiber equal to the irreducible representation of highest weight $\check{\nu}$

In particular, we get a pair of opposite Pl\"ucker data for both $\F_B$ and $\F_M$ that are related by the maps $\cV^{\check{\nu}}_{\F_B}\to \cU^{\check{\nu}}_{\F_M}$. Moreover, because these Pl\"ucker data are ``transverse'' away from $0$, i.e. the meromorphic compositions $\cL^{\check{\nu}}_{\F^0_T}\to \cV^{\check{\nu}}_{\F_B}\to
\cU^{\check{\nu}}_{\F_M}\to
\cL^{\check{\nu}}_{\F^0_T}$ are the identity maps, both bundles $\F_B$ and $\F_M$ have canonical reductions to the maximal torus away from $0$. However, because the $T$ bundle induced from $\F_B$ is trivialized, we see that both $\F_B$ and $\F_M$ are actually trivialized away from $0$. By \cite[Proposition 14.3]{BFG}, the action of $N_+(M)(\K)$ on $\Stot^\mu$ corresponds to changing the trivialization on the bundle $\F_M$ in a formal punctured disk centered at $0$. However, because this action leaves the maps $\cV^{\check{\nu}}_{\F_B}\to \cU^{\check{\nu}}_{\F_M}$ fixed, the trivialization on the bundle $\F_B$ must change by same amount. 

Now, let us study the identification $\oF{\lambda} = \bigsqcup_{\mu} \Stot^{\mu,\lambda}$. The bundle $\F_N$ in the definition of $\oF{\lambda}$ maps to the bundle $\F_B$, which we recall has a canonical reduction to $N$. As the Pl\"ucker data are also preserved under this identification, the trivialization of  $\F_N$ away from $0$ corresponds exactly to the trivialization of $\F_B$ away from $0$. 

\end{proof}

With this equivariance established, we can prove the following corollary to Theorem \ref{GeneralizationOfBaumannGaussentFormula}.

\begin{Corollary}{\label{ExplicitCrystalFormula}}
Fix $i \in I$. Let $W$ be a irreducible component of $\oF{\lambda}$, and let $W^\prime = f^k_i(Z)$ be the result of applying the $f_i$ operator $k$ times to $W$ (so $W^\prime$ is an irreducible component of $\oF{\lambda-k\alpha_i}$. Then there exists an dense subvariety $U \subset W^\prime$, all of whose $\C$-points can be written in the form $x_i(pt^{\phi_i(Z)})\cdot z$, where $p \in \C[t^{-1}]^+_k$ and $z \in W$. Here we are viewing the $\C$-points of  $W$ (resp. $W^\prime$) as a subset of $S^0 \cap T^\lambda$ (resp. $S^0 \cap T^{\lambda - k \alpha_i}$), and the action of $x_i$ is given via the left multiplication of $N(\K)$ on $S^0$. 
\end{Corollary}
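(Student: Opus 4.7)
The plan is to derive the corollary by combining Theorem \ref{GeneralizationOfBaumannGaussentFormula} with Proposition \ref{EquivarianceProposition}. The former provides the Baumann-Gaussent-type formula on the quasimap closures $\F^\lambda$, phrased in terms of the $N_+(M)(\K)$-action on $\Stot^\mu$; the latter lets us replace this abstract action by ordinary left multiplication by $N(\K)$ on $S^0$ under the BFK bijection. The corollary then reduces to bookkeeping needed to pass from closed MV cycles in $\F^\lambda$ to the open stratum $\oF{\lambda}$, and from $N_+(M)(\K)$-orbits in $\Stot^\mu$ to ordinary orbits in the affine Grassmannian.

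First, I would pass from $W \in \Irr(\oF{\lambda})$ to its closure $Z := \overline{W} \in \Irr(\F^\lambda)$, and likewise set $Z' := \overline{W'}$. Since the closure map identifies $\Irr(\oF{\lambda})$ with $\Irr(\F^\lambda)$ compatibly with the crystal structure, $Z' = f_i^k(Z)$. Applying Theorem \ref{GeneralizationOfBaumannGaussentFormula} yields the unique $\mu$ distinguishing $W$, the dense subvarieties $\dot{Z} = Z \cap \Stot^{\mu, \leq \lambda}$ and $\dot{Z}' = Z' \cap \Stot^{\mu, \leq \lambda - k\alpha_i}$, and the homeomorphism
\begin{equation*}
f : \C[t^{-1}]^+_k \times \dot{Z} \longrightarrow \dot{Z}', \qquad f(p,z) = x_i\bigl(p\, t^{\phi_i(W)}\bigr) \cdot z,
\end{equation*}
where $\cdot$ denotes the $N_+(M)(\K)$-action on $\Stot^\mu$.

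Next, I would build the dense open subvariety of $W'$ by setting
\begin{equation*}
U := f\bigl(\C[t^{-1}]^+_k \times (\dot{Z} \cap W)\bigr) \cap \oF{\lambda - k\alpha_i}.
\end{equation*}
Each of the pieces used here is open and dense in its ambient irreducible variety: $\dot{Z} \cap W$ is open and dense in $Z$ (both $\dot Z$ and $W$ are), $f(\C[t^{-1}]^+_k \times \dot{Z}) = \dot{Z}'$ is open and dense in $Z'$, and $\oF{\lambda - k\alpha_i}$ is open and dense in $\F^{\lambda - k\alpha_i}$. Hence $U$ is open and dense in $Z'$, and therefore dense in $W' = Z' \cap \oF{\lambda - k\alpha_i}$. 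By construction, every $\C$-point of $U$ has the form $x_i(p\,t^{\phi_i(W)}) \cdot z$ with $z \in \dot{Z} \cap W \subset W$.

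Finally, I would invoke Proposition \ref{EquivarianceProposition}: the BFK bijection $\bigsqcup_\nu \oF{\nu}(\C) \simeq \bigsqcup_\nu S^0 \cap T^{-\nu}$ is $N_+(M)(\K)$-equivariant, with the action on the right-hand side given by ordinary left multiplication through the inclusion $N_+(M)(\K) \hookrightarrow N(\K)$. Transporting the formula $u = x_i(p\,t^{\phi_i(W)}) \cdot z$ across this equivariant bijection gives the corollary verbatim. The main conceptual obstacle here—verifying that the abstract action of $x_i$ on $\Stot^\mu$ entering Theorem \ref{GeneralizationOfBaumannGaussentFormula} is really the same as left multiplication by $x_i$ on the affine Grassmannian—is precisely what Proposition \ref{EquivarianceProposition} has already settled, so the remaining work is just the density bookkeeping indicated above.
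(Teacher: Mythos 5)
Your argument is correct and follows essentially the same route as the paper's proof: pass to the closures $Z$, $Z'$ in $\F^\lambda$, $\F^{\lambda-k\alpha_i}$, apply Theorem \ref{GeneralizationOfBaumannGaussentFormula} to get the homeomorphism $f$ on the dense pieces $\dot{Z}$, $\dot{Z}'$, cut down to a dense subset of $W'$ by intersecting with the open strata, and then transport the formula via Theorem \ref{BFKBijection} and Proposition \ref{EquivarianceProposition} to left multiplication by $N(\K)$ on $S^0$. The only cosmetic difference is bookkeeping: you should intersect your $U$ with $W'$ itself (since $Z'\cap\oF{\lambda-k\alpha_i}$ could meet other components), which changes nothing in the density argument.
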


\begin{proof} Let $Z$ (resp. $Z^\prime$) be the closure of $W$ (resp. $W^\prime$) in $\F^\lambda$ (resp. $\F^{\lambda - k \alpha_i}$). We can find dense subvarieties $\dot{Z} \subset Z$ and $\dot{Z}^\prime \subset Z^\prime$ by intersecting with an appropriate $\Stot^\mu$. By Theorem \ref{GeneralizationOfBaumannGaussentFormula}, we have a homeomorphism $f:\C[t^{-1}]^+_k \times \dot{Z} \rightarrow \dot{Z}^\prime$. Let $V = \dot{Z}^\prime \cap W^\prime$. Then $U = f( f^{-1}(V) \cap \C[t^{-1}]^+_k \times \dot{Z})$ will consist of elements that can be written as $x_i(pt^{\phi_i(Z)})\cdot z$, where $p \in \C[t^{-1}]^+_k$ and $z \in W$. By Theorem \ref{BFKBijection}, we can interpret $z$ as a point in $S^0 \cap T^\lambda$, and by Proposition \ref{EquivarianceProposition} the action  $x_i(pt^{\phi_i(Z)})\cdot z$ to be that coming from left multiplication of $N(\K)$ on $S^0$.
\end{proof}

\section{Maya Diagrams and Fock Space}
For the remainder of the paper, we will let $G$ be a formal affine Kac-Moody group of type A, i.e. the group corresponding to the Lie algebra $\asl_n$ for $n \geq 2$. In this case we can identify the Dynkin diagram with the integers modulo $n$, i.e., $I = \Z/n\Z$. For these groups, we can build a very explicit representation called the Fermionic Fock space. In this section we will discuss the Maya-diagram/charged-partition bases of Fermionic Fock space, following the notation and diagrams from Tingley's expository notes \cite{Tingley}. Once we have described this basis, we will be able to write down formulas for an explicit action of $\asl_n$ on Fermionic Fock space. To begin, let us define Maya diagrams.

\begin{Definition} A {\bf left-black Maya diagram} is a sequence of white or black beads indexed by $\Z + \half$ such that all beads are black in sufficiently positive positions, and all beads are white in sufficiently negative positions. Here is an example of a left-black Maya diagram:

\setlength{\unitlength}{0.4cm}
\begin{center}
\begin{picture}(30,0.3)

\put(2.5,0){\ldots}
\put(26,0){\ldots}

\put(4.5,0){\circle*{0.5}}
\put(5.5,0){\circle*{0.5}}
\put(6.5,0){\circle*{0.5}}
\put(7.5,0){\circle*{0.5}}
\put(8.5,0){\circle*{0.5}}
\put(9.5,0){\circle*{0.5}}
\put(10.5,0){\circle*{0.5}}
\put(11.5,0){\circle*{0.5}}
\put(12.5,0){\circle{0.5}}
\put(13.5,0){\circle*{0.5}}
\put(14.5,0){\circle{0.5}}

\put(15,-0.5){\line(0,1){1}}

\put(15.5,0){\circle{0.5}}
\put(16.5,0){\circle*{0.5}}
\put(17.5,0){\circle*{0.5}}
\put(18.5,0){\circle{0.5}}
\put(19.5,0){\circle{0.5}}
\put(20.5,0){\circle*{0.5}}
\put(21.5,0){\circle{0.5}}
\put(22.5,0){\circle{0.5}}
\put(23.5,0){\circle{0.5}}
\put(24.5,0){\circle{0.5}}
\put(25.5,0){\circle{0.5}}

\put(14.8,-1){\tiny{0}}
\put(13.8,-1){\tiny{1}}
\put(12.8,-1){\tiny{2}}
\put(11.8,-1){\tiny{3}}
\put(10.8,-1){\tiny{4}}
\put(9.8,-1){\tiny{5}}
\put(8.8,-1){\tiny{6}}
\put(7.8,-1){\tiny{7}}
\put(6.8,-1){\tiny{8}}
\put(5.8,-1){\tiny{9}}
\put(4.6,-1){\tiny{10}}
\put(3.6,-1){\tiny{11}}

\put(15.6,-1){\tiny{-1}}
\put(16.6,-1){\tiny{-2}}
\put(17.6,-1){\tiny{-3}}
\put(18.6,-1){\tiny{-4}}
\put(19.6,-1){\tiny{-5}}
\put(20.6,-1){\tiny{-6}}
\put(21.6,-1){\tiny{-7}}
\put(22.6,-1){\tiny{-8}}
\put(23.6,-1){\tiny{-9}}
\put(24.4,-1){\tiny{-10}}
\put(25.4,-1){\tiny{-11}}

\end{picture}
\end{center}

\end{Definition}

\vspace{0.5cm}

Notice we have labeled the positions in increasing order from right to left following \cite{Tingley}. With this convention, all beads sufficiently left of zero must be black; hence the terminology \emph{left-black}. 

We can identify the set of all left-black Maya diagrams with downward-facing charged partitions (c.f \cite{Tingley} for the recipe). For example, we identify the previous left-black Maya diagram with the following downward-facing charged partition:

\pagebreak

\setlength{\unitlength}{0.4cm}

\begin{center}
\begin{picture}(30,0.3)

\put(2.5,0){\ldots}
\put(26,0){\ldots}

\put(4.5,0){\circle*{0.5}}
\put(5.5,0){\circle*{0.5}}
\put(6.5,0){\circle*{0.5}}
\put(7.5,0){\circle*{0.5}}
\put(8.5,0){\circle*{0.5}}
\put(9.5,0){\circle*{0.5}}
\put(10.5,0){\circle*{0.5}}
\put(11.5,0){\circle*{0.5}}
\put(12.5,0){\circle{0.5}}
\put(13.5,0){\circle*{0.5}}
\put(14.5,0){\circle{0.5}}

\put(15,-0.5){\line(0,1){1}}

\put(15.5,0){\circle{0.5}}
\put(16.5,0){\circle*{0.5}}
\put(17.5,0){\circle*{0.5}}
\put(18.5,0){\circle{0.5}}
\put(19.5,0){\circle{0.5}}
\put(20.5,0){\circle*{0.5}}
\put(21.5,0){\circle{0.5}}
\put(22.5,0){\circle{0.5}}
\put(23.5,0){\circle{0.5}}
\put(24.5,0){\circle{0.5}}
\put(25.5,0){\circle{0.5}}

\put(14.8,-1){\tiny{0}}
\put(13.8,-1){\tiny{1}}
\put(12.8,-1){\tiny{2}}
\put(11.8,-1){\tiny{3}}
\put(10.8,-1){\tiny{4}}
\put(9.8,-1){\tiny{5}}
\put(8.8,-1){\tiny{6}}
\put(7.8,-1){\tiny{7}}
\put(6.8,-1){\tiny{8}}
\put(5.8,-1){\tiny{9}}
\put(4.6,-1){\tiny{10}}
\put(3.6,-1){\tiny{11}}

\put(15.6,-1){\tiny{-1}}
\put(16.6,-1){\tiny{-2}}
\put(17.6,-1){\tiny{-3}}
\put(18.6,-1){\tiny{-4}}
\put(19.6,-1){\tiny{-5}}
\put(20.6,-1){\tiny{-6}}
\put(21.6,-1){\tiny{-7}}
\put(22.6,-1){\tiny{-8}}
\put(23.6,-1){\tiny{-9}}
\put(24.4,-1){\tiny{-10}}
\put(25.4,-1){\tiny{-11}}

\end{picture}
\end{center}

\vspace{0.2in}
\nopagebreak

\begin{center}
\begin{picture}(30,12)

\put(15,11.99){\line(-1,-1){10}}
\put(15,12){\vector(-1,-1){10}}
\put(15,12.01){\line(-1,-1){10}}
\put(4,1){$x$}

\put(15,11.99){\line(1,-1){10}}
\put(15,12){\vector(1,-1){10}}
\put(15,12.01){\line(1,-1){10}}
\put(26,1){$y$}

\put(16,11){\vector(-1,-1){8}}
\put(17,10){\line(-1,-1){4}}
\put(18,9){\line(-1,-1){3}}
\put(19,8){\line(-1,-1){3}}
\put(20,7){\line(-1,-1){1}}
\put(21,6){\line(-1,-1){1}}

\put(15,10){\line(1,-1){5}}
\put(14,9){\line(1,-1){3}}
\put(13,8){\line(1,-1){3}}
\put(12,7){\line(1,-1){1}}

\put(12.6,6.7){2}
\put(13.6,7.7){1}
\put(14.6,8.7){0}
\put(14.6,6.7){0}
\put(15.4,9.7){-1}
\put(15.4,7.7){-1}
\put(15.4,5.7){-1}
\put(16.4,8.7){-2}
\put(16.4,6.7){-2}
\put(17.4,7.7){-3}
\put(18.4,6.7){-4}
\put(19.4,5.7){-5}

\end{picture}
\end{center}

\noindent Following the recipe in \cite{Tingley}, we can label each box of the partition with an integer corresponding to which slot of the Maya diagram it lies below.

Let us define the {\bf Fermionic Fock space} $\Fock^-$ to be the formal $\C$-linear span of all left-black Maya diagrams (equivalently downward-facing charged partitions). 

\begin{Remark} Fermionic Fock space is usually defined with a basis consisting of semi-infinite wedge products. It is easy to see that the above definition is in natural bijection with the usual definition, with every Maya diagram corresponding to a particular semi-infinite wedge product (c.f. \cite{Tingley}).

\end{Remark}

Similarly, we can define right-black Maya diagrams. Again there is a bijection between right-black Maya diagrams and upward-facing charged partitions, and we denote by $\Fock^+$ its formal span. There is a natural bijection between right-black and left-black Maya diagrams coming from interchanging the roles of white and black (or equivalently flipping the charged-partition upside-down). This bijection induces a non-degenerate pairing between $\Fock^+$ and $\Fock^-$.

We will use box-numbering of charged partitions to define a representation of $\asl_n$ on $\Fock^+$ and $\Fock^-$. For the purposes of defining a representation of $\asl_n$, we will only consider the box-numberings modulo $n$. Because we will think of the affine Grassmannian as a right quotient, it will be convenient for our Lie algebras to act on the right. We define the action of $\asl_n$ on $\Fock^-$ by the following formula on Chevalley generators:

\begin{Definition}{\bf Action of $\asl_n$ on Fock Space}

Let $\gamma$ be a downward-facing charged partition. Then for all $i \in \Z / n\Z$,
\begin{align}
\vgamma E_i &:= \sum_{\small \begin{array}{c} \gamma \backslash \mu \text{ is an} \\ i \text{-colored box } \end{array}} \vmu  &  \vgamma F_i  &:= \sum_{\small \begin{array}{c} \mu \backslash \gamma \text{ is an} \\ i \text{-colored box } \end{array}} \vmu.
\end{align}

\end{Definition}

Prop 3.5.8 in \cite{Tingley} tells us that this definition gives rise to a Lie algebra action. We then  define a dual action on $\Fock^+$ by using the natural pairing with $\Fock^-$. It is easy to see that these representations are integrable, and they integrate to an action of the corresponding (minimal) Kac-Moody groups.

\begin{Remark}
A natural way to get right actions from left actions is via an anti-automorphism. There are two natural choices for $\asl_n$, the inverse and the Chevalley involution. The above formula comes from applying the Chevalley involution.
\end{Remark}

\begin{Remark} From the definitions, we see that $\Fock^+$ will be a lowest weight representation and $\Fock^-$ will be a highest weight representation. So for the formal Kac-Moody group, only an action on $\Fock^-$ will be defined. To define an action on $\Fock^+$, we need to complete it so the basis defined above becomes a topological basis.
\end{Remark}

\subsection{Valuations}

Let $V$ be a complex vector space with either the discrete topology or the topology of a vector space dual to a discrete vector space. Let us define the $\K$-vector space $V \hat{ \otimes} \K$ as follows: if $V$ is discrete, then $V \hat{ \otimes} \K = V \otimes \K $; if $V$ is dual to a discrete vector space $W$, then define  $V \hat{ \otimes} \K = \mathrm{Hom}_\C(W,\K)$.

Then we can define a function 
\begin{align*}
\val : V \hat{ \otimes} \K \rightarrow \Z \cup \{\pm \infty \}
\end{align*}
as follows. We define a decreasing filtration on $V \hat{\otimes} \K$ by subsets of the form $V \hat{\otimes} t^\ell \O$ (we define $V \hat{\otimes} t^\ell \O$ exactly as above). Let $x \in V \hat{ \otimes} \K$. If $x=0$, we set $\val(x) = \infty$. Otherwise, define $\val(x)$ to be the maximal $\ell$ such that $x \in V \hat{\otimes} t^\ell \O$; if no such $\ell$ exists, we set $\val(x) = -\infty$. Note that if $V$ is discrete, the filtration is exhaustive. In particular, $\val(x) = -\infty$ never occurs.

Taking $\K$ points of the action of $G$ on $\Fock^-$,we have an action of $G(\K)$ on $\Fock^-  \otimes \K$. Let $x_i:\K \rightarrow N_+\left(\K\right)$ be the one parameter subgroup corresponding to the simple coroot $\alpha_i$.  Let us fix $p \in \K$ with $\val(p) = \ell$, and let $x=x_i(p)$. Let $\vgamma \in \Fock^- \subset \Fock^- \otimes \K$ be a basis vector, i.e. $\gamma$ is a downward-facing charged partition. Then using the series expansion of the exponential function, we see that $\vgamma x_i(p) = \vgamma  \sum_{k=0}^{\infty} E_i^k \otimes \frac{p^k}{k!}$. Because, the action of the Chevalley generators is locally nilpotent, this sum is actually a finite sum. Using the explicit formula above, we see that $\val \left( \vgamma x_i(p) \right) = \min \left\{\val(p) \cdot B, 0 \right\}$, where $B$ is the maximal number of $i$-colored boxes that can be added to $\gamma$.

Now let $x = x_i(p)\cdot z$, where $z \in N_+(\K)$. Let us compute $\val \left( \vgamma x_i(p)\cdot z \right)$. By what we have just said, $ \vgamma x_i(p)$ consists of a finite sum of terms $ \sum \vmu \otimes a_\mu$ where $\mu$ is obtained by adding $i$-colored boxes to $\gamma$ and $\val(a_\mu) = \val(p) \cdot |\gamma \backslash \mu|$. So $\vgamma  x_i(p)\cdot z = \sum \vmu z \otimes a_\mu$. For a generic choice of $p$, none of these terms will cancel and we have proved the following lemma.

\begin{Lemma}

Consider all $p \in \K$ of a fixed valuation. Let $z \in N_+(\K)$. Then for $\vgamma \in \Fock^- \subset \Fock^- \otimes \K$ and generic choices of $p$, we have:

\begin{align}\label{FockSpaceValuationFormula}
\emph{\val} \left( \vgamma x_i(p)\cdot z \right) = \min_{\small \begin{array}{c} \mu \text{ obtained by removing } \\ i \text{-colored boxes from } \gamma \end{array}} \left\{ \emph{\val}\left(\vmu  z\right) + \emph{\val}(p) \cdot |\gamma \backslash \mu| \right\}
\end{align}

\end{Lemma}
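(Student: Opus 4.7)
The plan is to follow the sketch in the paragraph preceding the statement, filling in the genericity argument. First, I would use local nilpotence of $E_i$ on $\Fock^-$ to expand
\begin{align*}
\vgamma x_i(p) \cdot z = \sum_\mu \frac{c_\mu^\gamma}{k_\mu!}\, p^{k_\mu} (\vmu z),
\end{align*}
where the sum ranges over Maya diagrams $\mu$ obtained from $\gamma$ by successive removal of $i$-colored boxes, $k_\mu = |\gamma \setminus \mu|$, and $c_\mu^\gamma \in \Z_{>0}$ counts orderings of such removals. Since $\val(p^{k_\mu}) = \val(p) \cdot k_\mu$, subadditivity of the valuation immediately yields $\val(\vgamma x_i(p) \cdot z) \geq M$, where $M$ denotes the right-hand side of the claimed formula.

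For the reverse inequality, I would write $p = \sum_{j \geq \ell} p_j t^j$ with $\ell = \val(p)$ and $p_\ell \ne 0$, and extract the coefficient of $t^M$ in $\vgamma x_i(p) \cdot z$ viewed as a formal $\Fock^-$-valued series. Because $p^{k_\mu}$ has valuation exactly $\ell k_\mu$ and $\vmu z$ has valuation $v_\mu$, a convolution check rules out contributions from pairs $(j_1, j_2)$ with $j_1 + j_2 = M$ other than $(j_1, j_2) = (\ell k_\mu, v_\mu)$ for $\mu$ in the minimizing set $S$. Explicitly,
\begin{align*}
[\vgamma x_i(p) \cdot z]_M = \sum_{\mu \in S} \frac{c_\mu^\gamma}{k_\mu!}\, p_\ell^{k_\mu}\, [\vmu z]_{v_\mu} \in \Fock^-,
\end{align*}
where $[\vmu z]_{v_\mu} \neq 0$ is the leading coefficient of $\vmu z$. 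Crucially, the higher coefficients $p_{\ell+1}, p_{\ell+2}, \dots$ of $p$ do not enter this expression, so the whole obstruction to $\val = M$ is concentrated in a single polynomial in $p_\ell$.

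The main obstacle is then to verify that this $\Fock^-$-valued polynomial in $p_\ell$ is not identically zero, i.e.\ that for at least one $k \in \{k_\mu : \mu \in S\}$ the sum $\sum_{\mu \in S,\, k_\mu = k} c_\mu^\gamma\, [\vmu z]_{v_\mu}$ is nonzero in $\Fock^-$. I expect to handle this by combining the invertibility of $z$ on $\Fock^- \hat\otimes \K$ with weight considerations: the $\vmu$ have pairwise distinct weights, and $N_+(\K)$ acts by strictly triangular operators in a weight-adapted sense, which prevents simultaneous cancellation of the leading terms across all degrees $k$ while the $c_\mu^\gamma$ are positive integers. Once the polynomial is known to be nonzero, its zero set in $\C$ is finite, hence avoided by $p$ on a Zariski-open subset of elements of valuation $\ell$; on this subset $\val(\vgamma x_i(p) \cdot z) = M$ exactly, completing the lemma.
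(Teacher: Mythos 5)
Your proposal follows the same route as the paper's own argument: expand $x_i(p)$ by the exponential series using local nilpotence of $E_i$, record that $\vmu z$ appears with coefficient a positive multiple of $p^{|\gamma\backslash\mu|}$, and invoke genericity of $p$ to rule out cancellation. In fact your bookkeeping is sharper than the paper's one-line assertion (``for a generic choice of $p$, none of these terms will cancel''): you correctly isolate that the coefficient of $t^M$ depends only on the leading coefficient $p_\ell$, so varying $p$ within a fixed valuation can only separate contributions with different $k_\mu=|\gamma\backslash\mu|$, and the whole lemma reduces to showing that for some $k$ the vector $w_k=\sum_{\mu\in S,\,k_\mu=k}c^\gamma_\mu\,(\text{coefficient of }t^{v_\mu}\text{ in }\vmu z)$ is nonzero in $\Fock^-$.

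It is exactly this last step that your proposal does not actually establish, and the reasons you cite do not hold up. First, the diagrams $\mu$ contributing to a fixed $w_k$ do \emph{not} have pairwise distinct weights for $\asl_n$: each is obtained from $\gamma$ by removing $k$ boxes of color $i$, so they all have weight $\wt(\gamma)$ shifted by $k\check{\alpha}_i$; they are distinct Maya diagrams, hence distinct $\agl_\infty$-weights, but $N_+(\K)\subset\widehat{SL}_n(\K)$ does not preserve that finer grading, so no triangularity with respect to it is available. Second, invertibility of $z$ only gives $\K$-linear independence of the vectors $\vmu z$, which says nothing about their leading terms: linearly independent elements of $\Fock^-\hat\otimes\K$ can perfectly well have cancelling leading coefficients. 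Third, the genuine triangularity ($\vmu z=\vmu$ plus terms supported on diagrams with strictly fewer boxes, with the $\vmu$-coefficient exactly $1$), combined with positivity of $c^\gamma_\mu$, does settle the case where some $\mu$ in the group has $\val(\vmu z)=0$ — then the $\vmu$-component of $w_k$ is $c^\gamma_\mu/k!>0$, since the other (incomparable) diagrams in the group cannot contribute to it — but when all relevant $\val(\vmu z)$ are strictly negative, the leading coefficients live strictly below the $\mu$'s, on possibly overlapping sets of diagrams, with matrix coefficients of $z$ of arbitrary sign; positivity of the fixed integers $c^\gamma_\mu$ gives no genericity to exploit there. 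To be fair, the paper itself asserts this non-cancellation without detail, so you have honestly located where the work remains; but as written your justification is an incorrect claim plus a heuristic, and the lemma is not closed without an actual argument for the non-vanishing of some $w_k$.
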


Note, that for $v \in \Fock^-$, $\val(v \cdot -)$ makes sense as a function on $\GK / G(\O)$, because the right action of $G( \O )$ preserves the valuation. We will later see that these functions will pick out double MV cycles when we let $\vgamma$ varies through the basis of $\Fock^-$.

\section{The Naito-Sagaki-Saito Crystal}

In this section, we will recall the definition of the Naito-Sagaki-Saito (NSS) crystal defined in \cite{NSS}. Their construction is a natural extension of the theory of MV polytopes and Berenstein-Zelevinsky (BZ) data in finite-type cases (c.f \cite{Kam}). We will present their crystal structure in a way that is most streamlined for our purposes. In particular, we will rephrase their construction using the language of Maya diagrams and charged partitions. The translations from their language to our language is straightforward, but in our presentation the motivation from MV polytopes is obscured.

\begin{Definition}{\label{preNSSdef}}
Let $M_\bullet$ be collection of integers indexed by left-black Maya diagrams satisfying the following conditions. Let $\tau$ be any right-black Maya diagram, and let $I$ be an interval containing its support. We can then form a right-black Maya diagram $\gamma_I$ by inverting all the colors of $\tau$ outside of the interval $I$. We say that $M_\bullet$ is a {\bf pre-NSS datum} if there is a constant $\Theta \left(M\right)_\tau$ such that $M_{\gamma_I}=\Theta \left(M\right)_\tau$ for all $I$ sufficiently large, i.e. there exists an interval $I_0$ such that $M_{\gamma_I} = \Theta \left(M\right)_\tau$ for all intervals $I$ containing $I_0$.
\end{Definition}

The main consequence of this condition is that it allows us to define another collection of integers $\Theta \left(M\right)_\bullet$ indexed by right-black diagrams. Abusing notation, we will write $M_\tau := \Theta \left(M\right)_\tau$ for any right-black Maya diagram $\tau$.

Let us define some notation for certain special right-black Maya diagrams. For every integer $i$, let us define $\Lambda_i$ to be the right-black Maya diagram that has a black bead in every position less than $i$ and a white bead in every position greater than $i$. Let $s_i \Lambda_i$ be the right-black Maya diagram obtained by switching the colors of the beads in the  $i+\half$ and $i-\half$ positions of $\Lambda_i$. 

For example, $\Lambda_2$ is this Maya diagram:
\setlength{\unitlength}{0.4cm}
\begin{center}
\begin{picture}(30,0.3)

\put(2.5,0){\ldots}
\put(26,0){\ldots}

\put(4.5,0){\circle{0.5}}
\put(5.5,0){\circle{0.5}}
\put(6.5,0){\circle{0.5}}
\put(7.5,0){\circle{0.5}}
\put(8.5,0){\circle{0.5}}
\put(9.5,0){\circle{0.5}}
\put(10.5,0){\circle{0.5}}
\put(11.5,0){\circle{0.5}}
\put(12.5,0){\circle{0.5}}
\put(13.5,0){\circle*{0.5}}
\put(14.5,0){\circle*{0.5}}

\put(15,-0.5){\line(0,1){1}}

\put(15.5,0){\circle*{0.5}}
\put(16.5,0){\circle*{0.5}}
\put(17.5,0){\circle*{0.5}}
\put(18.5,0){\circle*{0.5}}
\put(19.5,0){\circle*{0.5}}
\put(20.5,0){\circle*{0.5}}
\put(21.5,0){\circle*{0.5}}
\put(22.5,0){\circle*{0.5}}
\put(23.5,0){\circle*{0.5}}
\put(24.5,0){\circle*{0.5}}
\put(25.5,0){\circle*{0.5}}

\put(14.8,-1){\tiny{0}}
\put(13.8,-1){\tiny{1}}
\put(12.8,-1){\tiny{2}}
\put(11.8,-1){\tiny{3}}
\put(10.8,-1){\tiny{4}}
\put(9.8,-1){\tiny{5}}
\put(8.8,-1){\tiny{6}}
\put(7.8,-1){\tiny{7}}
\put(6.8,-1){\tiny{8}}
\put(5.8,-1){\tiny{9}}
\put(4.6,-1){\tiny{10}}
\put(3.6,-1){\tiny{11}}

\put(15.6,-1){\tiny{-1}}
\put(16.6,-1){\tiny{-2}}
\put(17.6,-1){\tiny{-3}}
\put(18.6,-1){\tiny{-4}}
\put(19.6,-1){\tiny{-5}}
\put(20.6,-1){\tiny{-6}}
\put(21.6,-1){\tiny{-7}}
\put(22.6,-1){\tiny{-8}}
\put(23.6,-1){\tiny{-9}}
\put(24.4,-1){\tiny{-10}}
\put(25.4,-1){\tiny{-11}}

\end{picture}
\end{center}

\vspace{0.2in}

\noindent And $s_2 \Lambda_2$ is this one:

\setlength{\unitlength}{0.4cm}
\begin{center}
\begin{picture}(30,0.3)

\put(2.5,0){\ldots}
\put(26,0){\ldots}

\put(4.5,0){\circle{0.5}}
\put(5.5,0){\circle{0.5}}
\put(6.5,0){\circle{0.5}}
\put(7.5,0){\circle{0.5}}
\put(8.5,0){\circle{0.5}}
\put(9.5,0){\circle{0.5}}
\put(10.5,0){\circle{0.5}}
\put(11.5,0){\circle{0.5}}
\put(12.5,0){\circle*{0.5}}
\put(13.5,0){\circle{0.5}}
\put(14.5,0){\circle*{0.5}}

\put(15,-0.5){\line(0,1){1}}

\put(15.5,0){\circle*{0.5}}
\put(16.5,0){\circle*{0.5}}
\put(17.5,0){\circle*{0.5}}
\put(18.5,0){\circle*{0.5}}
\put(19.5,0){\circle*{0.5}}
\put(20.5,0){\circle*{0.5}}
\put(21.5,0){\circle*{0.5}}
\put(22.5,0){\circle*{0.5}}
\put(23.5,0){\circle*{0.5}}
\put(24.5,0){\circle*{0.5}}
\put(25.5,0){\circle*{0.5}}

\put(14.8,-1){\tiny{0}}
\put(13.8,-1){\tiny{1}}
\put(12.8,-1){\tiny{2}}
\put(11.8,-1){\tiny{3}}
\put(10.8,-1){\tiny{4}}
\put(9.8,-1){\tiny{5}}
\put(8.8,-1){\tiny{6}}
\put(7.8,-1){\tiny{7}}
\put(6.8,-1){\tiny{8}}
\put(5.8,-1){\tiny{9}}
\put(4.6,-1){\tiny{10}}
\put(3.6,-1){\tiny{11}}

\put(15.6,-1){\tiny{-1}}
\put(16.6,-1){\tiny{-2}}
\put(17.6,-1){\tiny{-3}}
\put(18.6,-1){\tiny{-4}}
\put(19.6,-1){\tiny{-5}}
\put(20.6,-1){\tiny{-6}}
\put(21.6,-1){\tiny{-7}}
\put(22.6,-1){\tiny{-8}}
\put(23.6,-1){\tiny{-9}}
\put(24.4,-1){\tiny{-10}}
\put(25.4,-1){\tiny{-11}}

\end{picture}
\end{center}

\vspace{0.2in}

\begin{Remark}
The above notation corresponds to the fact that we can identify $\left\{\Lambda_i\right\}_{i\in \Z}$ with the fundamental weights for $\widetilde{GL_\infty}$, and $\left\{s_i\right\}$ with the simple reflections in its Weyl group. 
\end{Remark}

With this notation in hand, we can define the action of the ``$A_\infty$ Kashiwara operators'' on pre-NSS data:

\begin{Definition}
Let $M_\bullet$ be a pre-NSS datum, and let $i$ be an integer. We define a new pre-NSS datum $f_i(M)$ as follows. Let $c_i(M) = M_{\Lambda_i} - M_{s_i\Lambda_i} - 1$. Then we define:

\begin{align}
\left(\tilde{f}_iM\right)_\gamma = \min_{\small \begin{array}{c} \mu \text{ obtained by removing } \\ i \text{-colored boxes from } \gamma \end{array}} \left\{ M_\mu + |\gamma \backslash \mu| \cdot c_i(M) \right\}
\end{align}

Notice that there will be at most one removable box of color $i$, so the min in the definition is taken over a set of either one or two elements.
\end{Definition}

\begin{Remark}
The definition above differs slightly in form from the one in \cite{NSS}. However, an immediate calculation shows that they are equivalent.
By the second part of \cite[Proposition 3.3.2]{NSS}, we see that the definition makes sense, i.e. $f_iM$ satisfies the pre-NSS condition.
\end{Remark}

Let us fix an integer $n\ge2$. We have a natural operator $\sigma$ on the set of left-black Maya diagrams given by shifting the positions by $n$. We say a pre-NSS datum $M$ is {\bf n-periodic}, if $M_{\sigma(\gamma)} = M_\gamma$ for all left-black Maya diagrams $\gamma$. Restricting ourselves to the set of n-periodic pre-NSS data, we define the set of $\asl_n$ Kashiwara operators as follows.

\begin{Definition}
Let $i \in \Z/n\Z$, and let $M$ be an n-periodic pre-NSS datum. We define

\begin{align}
\hat{f}_{i} M = \left(\prod_{k \in \Z} \tilde{f}_{i+kn}\right) \left( M \right)
\end{align}

The right hand side of the definition requires some explanation. First we note that the operators $\tilde{f}_{i+kn}$ commute. Furthermore, for every right-black Maya diagram $\gamma$, there exists a finite-length interval $I_0 \subset \Z + \half$ such that $\left(\prod_{k \in I} \tilde{f}_{i+kn} \right) \left( M \right)_\gamma = \left(\prod_{k \in I_0} \tilde{f}_{i+kn} \right) \left( M \right)_\gamma$ for all intervals $I$ containing $I_0$. We therefore define $\left(\prod_{k \in \Z} \tilde{f}_{i+kn} \right) \left( M \right)_\gamma = \left(\prod_{k \in I_0} \tilde{f}_{i+kn} \right) \left( M \right)_\gamma$

\end{Definition}

Along with these operators, we have the additional data required to define a crystal. The $\hat{e}_i$ operators are defined in the obvious way. To every $n$-periodic pre-NSS datum $M$, we associate a coweight $\wt(M) := \sum_{i \in \Z/n\Z} M_{\Lambda_i} \cdot \hat{h}_{i}$ where $\left\{ \hat{h}_{i} \right\}$ are the simple coroots for $\asl_n$. We define $\hat{\varepsilon}_{i}(M) := -M_{\Lambda_i} - M_{s_i\Lambda_i} + M_{\Lambda_{i-1}} +M_{\Lambda{i+1}}$. And we define $\hat{\phi}_{i}(M) = \left\langle \wt(M), h_{i} \right\rangle + \hat{\varepsilon}_{i}(M)$. With these definitions in place, we can give an explicit formula for the $\hat{f}_{i}$ operators.

\begin{Lemma}{\label{FormulaForNSSOperators}}
Let $M_\bullet$ be a $n$-periodic pre-NSS datum. Then
\begin{align}
\left(\hat{f}_iM\right)_\gamma = \min_{\small \begin{array}{c} \mu \text{ obtained by removing } \\ i \text{-colored boxes from } \gamma \end{array}} \left\{ M_\mu + |\gamma \backslash \mu| \cdot \left(\hat{\phi}_{i}(M)-1\right) \right\}
\end{align}
\end{Lemma}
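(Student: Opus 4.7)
The plan is to unwind the definition of $\hat{f}_i$ as the infinite product of the operators $\tilde{f}_{i+kn}$ and directly compute its effect on a left-black Maya diagram $\gamma$. The key preliminary observation is that $n$-periodicity of $M$ gives $c_{i+kn}(M) = c_i(M)$ for every $k \in \Z$: indeed, $\sigma$ carries $\Lambda_i$ to $\Lambda_{i+n}$ and $s_i\Lambda_i$ to $s_{i+n}\Lambda_{i+n}$, so $M_{\Lambda_{i+kn}} = M_{\Lambda_i}$ and $M_{s_{i+kn}\Lambda_{i+kn}} = M_{s_i\Lambda_i}$ after one extends periodicity from left-black to right-black Maya diagrams via the $\Theta$ construction.

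Next, I would observe that each operator $\tilde{f}_j$ preserves the values $M_{\Lambda_{j'}}$ and $M_{s_{j'}\Lambda_{j'}}$ for every $j' \neq j$: the upward-facing charged partition associated to $\Lambda_{j'}$ has no boxes at all, and the one associated to $s_{j'}\Lambda_{j'}$ has a unique box of color $j'$, which is not a removable $j$-colored box when $j \neq j'$. Consequently, in any finite subproduct $\prod_{k \in F}\tilde{f}_{i+kn}$, the parameter $c_{i+kn}$ of the current pre-NSS datum remains equal to $c_i(M)$ at the moment $\tilde{f}_{i+kn}$ is applied. Using the stated commutativity of the operators, this lets me compute inductively
\begin{align*}
\Bigl(\prod_{k \in F}\tilde{f}_{i+kn}\, M\Bigr)_\gamma = \min_{\mu}\bigl\{ M_\mu + |\gamma \setminus \mu| \cdot c_i(M)\bigr\},
\end{align*}
where $\mu$ ranges over the charged partitions obtainable from $\gamma$ by successively removing boxes of colors $i + kn$ with $k \in F$. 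Taking $F$ large enough to exhaust the finitely many boxes of $\gamma$ of color congruent to $i$ modulo $n$, and appealing to the stabilization of the infinite product, yields the formula with cost $c_i(M)$ per removed box.

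Finally, it remains to identify $c_i(M)$ with $\hat{\phi}_i(M) - 1$. Unwinding the definitions,
\begin{align*}
\hat{\phi}_i(M) - 1 = \langle \wt(M), h_i\rangle - M_{\Lambda_i} - M_{s_i\Lambda_i} + M_{\Lambda_{i-1}} + M_{\Lambda_{i+1}} - 1,
\end{align*}
so matching with $c_i(M) = M_{\Lambda_i} - M_{s_i\Lambda_i} - 1$ reduces to the identity
\begin{align*}
\langle \wt(M), h_i\rangle = 2M_{\Lambda_i} - M_{\Lambda_{i-1}} - M_{\Lambda_{i+1}},
\end{align*}
which is immediate from $\wt(M) = \sum_j M_{\Lambda_j}\hat{h}_j$ together with the $\asl_n$ Cartan pairings $\langle\hat{h}_j, h_i\rangle = 2\delta_{ij} - \delta_{i-1,j} - \delta_{i+1,j}$.

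The main obstacle is the second step: the commutativity of the $\tilde{f}_{i+kn}$ must be supplemented by the verification that the relevant parameter $c_{i+kn}$ never drifts from $c_i(M)$ during the composition, so that each box removal contributes the same cost; this is what makes the infinite product collapse to a single minimum rather than an iterated one. Once this uniformity is established, together with the Cartan-matrix identity of the final step, the formula of the lemma follows at once.
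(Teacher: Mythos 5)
Your proof takes essentially the same route as the paper's: show that every constant $c_{i+kn}$ occurring in the infinite product stays equal to $c_i(M)$, collapse the iterated minima into a single minimum over removals of boxes of color congruent to $i$ modulo $n$, and unwind the definitions of $\wt$, $\hat{\varepsilon}_i$, $\hat{\phi}_i$ to get $c_i(M)=\hat{\phi}_i(M)-1$ (the paper dismisses the first step as ``a straightforward calculation,'' which you actually supply). One small correction to that supplied detail: under the correspondence between removing boxes from the approximating left-black diagrams $\gamma_I$ and \emph{adding} boxes to the right-black diagram $\tau$, the value at $s_{j'}\Lambda_{j'}$ is untouched by $\tilde{f}_j$ because the one-box upward partition has addable corners only of colors $j'\pm 1$, so the condition you need is $j\not\equiv j'\pm 1$ (automatic here, since $j-j'$ is a nonzero multiple of $n\ge 2$), not merely $j\neq j'$ as stated.
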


\begin{proof}
A straightforward calculation shows $c_{i+kn}( \tilde{f}_{i+nl} N) = c_{i+kn} (N)$ for $k \ne l$ and any n-periodic pre-NSS datum $N$. So all the $c_{i+nk}(N)$ that appear when applying the various $\tilde{f}_{i+nk}$ operators are equal to $c_i(M)$. Each $\tilde{f}_{i+nk}$ operator acts by taking minimum over the two possibilities of either adding a box of color $i+nk$ or not adding a box. When we take the infinite product, we get a minimum over the possibilities of removing any number of boxes of color congruent to $i$ modulo $n$. Because the numbers $c_{i+kn}$ stay constant at each step, we get the following formula:
\[ (\hat{f}_i M)_\gamma = \min\{ M(Z)_\mu + |\gamma / \mu| \cdot  c_i(M) \} \]
The final step is to unwind the definition of $\hat{\phi}_{i}(M)$ to see that $c_i(M) = \hat{\phi}_{i}(M)-1$.

\end{proof}

We end this section by stating the main result of \cite{NSS}. Let $O_\bullet$ be the pre-NSS data that assigns the value zero to each left-black Maya diagram. This clearly satisfies the pre-NSS condition and is $n$-periodic for any $n$. For any $n \geq 2$, let us define the set of {\bf NSS data} $\cN = \cN_n$ to be the set of all pre-NSS data obtained by applying a sequence of $\hat{f}_i$ operators to $O$.

\begin{Theorem}{\cite{NSS}}{\label{NSSCrystalStructure}}
Let $n \ge 3$. Then the set $\cN \cup \{0\}$, along with the data of $\hat{f}_{i}, \hat{e}_i, \wt, \hat{\varepsilon}_{i}, \hat{\phi}_{i}  $ form a crystal that is isomorphic to the $B( \infty )$ crystal for $\asl_n$.
\end{Theorem}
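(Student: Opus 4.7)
My plan is to establish the theorem in three stages: first verify that $\mathcal{N} \cup \{0\}$ satisfies the basic crystal axioms, then show the underlying graph is connected with a unique source $O_\bullet$ of weight zero, and finally invoke Stembridge's characterization of $B(\infty)$ in simply-laced affine type to pin down the isomorphism.

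For the first stage, I would verify the axioms (i)--(iv) of a crystal directly from the explicit formula for $\hat{f}_i$ given in Lemma \ref{FormulaForNSSOperators} and the definitions of $\wt$, $\hat{\varepsilon}_i$, $\hat{\phi}_i$. The key computational point is to check that applying $\hat{f}_i$ shifts $\wt$ by $-\check{\alpha}_i$ and adjusts $\hat{\varepsilon}_i, \hat{\phi}_i$ correctly. This amounts to tracking how the values $M_{\Lambda_j}$ and $M_{s_j \Lambda_j}$ transform under the infinite product $\prod_{k \in \mathbb{Z}} \tilde{f}_{i+kn}$; because the $c_{i+kn}$ stay constant during the process (as already noted in the proof of Lemma \ref{FormulaForNSSOperators}), these transformations reduce to a finite calculation for each $j \in \mathbb{Z}/n\mathbb{Z}$. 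Compatibility of $\hat{e}_i$ as the partial inverse of $\hat{f}_i$ is then immediate from the definition.

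The second stage is to observe that $O_\bullet$ is annihilated by every $\hat{e}_i$ and has weight zero, and that by construction every element of $\mathcal{N}$ is reached from $O$ by a sequence of $\hat{f}_i$'s. Together with the first stage, this identifies $(\mathcal{N} \cup \{0\}, \hat{e}_i, \hat{f}_i, \wt, \hat{\varepsilon}_i, \hat{\phi}_i)$ as a connected upper-normal crystal of highest weight zero for $\asl_n$. By the general crystal-theoretic characterization of $B(\infty)$ among such crystals, it suffices to verify Stembridge's local axioms.

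For the final stage, I would apply Stembridge's theorem, which characterizes $B(\infty)$ in the simply-laced case by a short list of local conditions on pairs of adjacent indices $i,j \in I$, controlling the compositions $\hat{e}_i \hat{e}_j$, $\hat{e}_j \hat{e}_i$ and the associated string functions. Since $\asl_n$ for $n \geq 3$ is simply laced and every pair of indices is either disjoint ($a_{ij} = 0$) or adjacent ($a_{ij} = -1$), the verification splits into two cases. The main obstacle will be the adjacent case: here I must compare $\hat{\varepsilon}_i(\hat{e}_j M) - \hat{\varepsilon}_i(M)$ with $\hat{\varepsilon}_j(\hat{e}_i M) - \hat{\varepsilon}_j(M)$ and check the $\hat{e}_i \hat{e}_j^2 \hat{e}_i$ versus $\hat{e}_j \hat{e}_i^2 \hat{e}_j$ relations. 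Because of the $n$-periodicity and the infinite-product definition of the operators, this reduces to a comparison of how boxes of colors $i+kn$ and $j+kn$ can be simultaneously removed from Maya diagrams; the combinatorics of removable boxes of different colors on the same diagram makes the relevant identities visible. The case $n = 2$ fails precisely because the roots $\alpha_0$ and $\alpha_1$ are no longer adjacent in the simply-laced sense (they are doubly linked in the affine diagram), so Stembridge's simply-laced axioms do not apply, which is why this approach is restricted to $n \geq 3$.
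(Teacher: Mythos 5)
Your plan is essentially a reconstruction of the original combinatorial proof in \cite{NSS}: verify the crystal axioms for the folded operators, note that $\cN$ is by construction connected and generated from $O_\bullet$, and then invoke the Stembridge-type local characterization of $B(\infty)$ in the simply-laced case. That is exactly how the cited theorem is established in \cite{NSS}, and it is why the statement carries the hypothesis $n \ge 3$. It is, however, not the route this paper takes: here Theorem \ref{NSSCrystalStructure} is quoted as an input from \cite{NSS}, and the paper's own, independent proof of the conclusion comes later and is geometric --- one shows (Proposition \ref{GeometricConstructionOfNSSOperator} and the main theorem of Section 6) that the assignment $Z \mapsto M(Z)$ of generic valuations on Fock space defines a crystal isomorphism $M : \cL \rightarrow \cN$ from the set of double MV cycles with the Braverman--Finkelberg--Gaitsgory crystal structure, which is already known to be $B(\infty)$; the isomorphism of crystals then transports this identification to $\cN$. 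The trade-off is clear: your route is self-contained and purely combinatorial, but the real work is hidden in the final stage --- checking the local axioms for the operators $\hat{f}_i = \prod_{k} \tilde{f}_{i+kn}$ on $n$-periodic data (and using the correct $B(\infty)$-variant of Stembridge's axioms rather than the highest-weight version), and it is intrinsically limited to $n \ge 3$ since $\asl_2$ is not simply laced. The paper's geometric argument avoids Stembridge altogether, works uniformly, and in particular yields the previously unknown case $n = 2$, which is one of its main points.
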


Their proof is purely combinatorial, depending on a result of Stembridge that only applies for $n \ge 3$. In the next section we will give an independent geometric proof of this result that includes the case $n=2$.

\section{Geometric Realization of NSS Data}

In this section we will show how to construct the set of NSS data from double MV cycles for $\widehat{SL}_n$. Moreover, we will give an independent proof that the NSS crystal is the $B\left(\infty\right)$-crystal. In particular, our proof will work in the case of $\asl_2$, where the combinatorial methods of \cite{NSS} do not apply.

Let us fix an integer $n \ge 2$, and let $G = \widehat{SL}_n$. Fix a coweight $\lambda \in \Lambda^+$. For each left-black Maya diagram $\gamma$, we define a function $D_\gamma : \oF{\lambda}(\C) \rightarrow \Z$ as follows: $D_\gamma([g]) = \val(\vgamma \cdot g)$. Here, $[g]$ is an element of $S^0 \cap T^{-\lambda}$, which we view as a subset of $\GK/G(\O)$.

\begin{Proposition}{\label{ConstructibilityProposition}}
The functions $D_\gamma$ are constructible.
\end{Proposition}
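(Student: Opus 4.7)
The plan is to decompose $D_\gamma$ as a pointwise minimum of finitely many constructible functions on $\oF{\lambda}$, and then conclude by Noetherianness of $\oF{\lambda}$.

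First, I would observe that only finitely many Maya diagrams $\mu$ contribute to the expansion of $\vgamma \cdot g$ in the Maya-diagram basis. Under the right action of $\asl_n$, each Chevalley generator $E_i$ either removes an $i$-colored box from $\gamma$ or annihilates $\vgamma$, and preserves the charge of the underlying Maya diagram. Hence the subspace
\begin{equation*}
\Fock^-_\gamma := \mathrm{span}_\C\bigl\{\,\vmu : |\mu| \le |\gamma|,\ \mathrm{charge}(\mu) = \mathrm{charge}(\gamma)\,\bigr\}
\end{equation*}
is finite-dimensional and preserved by every one-parameter subgroup $x_i(a)$, and therefore by all of $N_+(\K)$. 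Consequently one may write
\begin{equation*}
\vgamma \cdot g \;=\; \sum_{\mu} c_\mu(g)\,\vmu \quad\text{in}\ \ \Fock^-_\gamma \otimes_\C \K,
\end{equation*}
the sum running over a fixed finite set of $\mu$ and with $c_\mu(g) \in \K$. Thus $D_\gamma(g) = \min_\mu \val(c_\mu(g))$, which reduces the problem to showing that each $\val(c_\mu)$ is a constructible function on $\oF{\lambda}$.

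Next, I would establish that each $c_\mu : \oF{\lambda}(\C) \to \K$ comes from an algebraic morphism whose image is bounded inside $t^{-L}\O$ for some $L = L(\mu)$. Using the Zastava description of $\oF{\lambda}$ together with Proposition~\ref{EquivarianceProposition}, the universal $N$-bundle is canonically trivialized away from $0$, and a local choice of trivialization on the formal disk at $0$ produces, Zariski-locally on $\oF{\lambda}$, an algebraic lift $U \to N_+(\K)$ of the BFK map. The right action on $\Fock^-_\gamma$ is given by the exponential series of the Chevalley generators and is polynomial in the $t$-coefficients of the input; since $\oF{\lambda}$ is of finite type, the coefficients $c_\mu$ land in some $t^{-L}\O \subset \K$. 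Expanding $c_\mu(g) = \sum_{j \ge -L} c_{\mu,j}(g)\,t^j$ with $c_{\mu,j} : \oF{\lambda} \to \C$ regular gives $\val(c_\mu(g)) = \min\{\,j : c_{\mu,j}(g) \ne 0\,\}$.

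Since $\oF{\lambda}$ is a Noetherian scheme, the descending chain of closed subsets $Z_J := \{\,g : c_{\mu,j}(g) = 0\ \text{for all}\ -L \le j \le J\,\}$ stabilizes at some $J = J_\mu$, beyond which it coincides with the zero locus of $c_\mu$ itself. Therefore $\val(c_\mu)$ takes only the finitely many values $-L,-L+1,\dots,J_\mu,\infty$, and each level set is locally closed, so $\val(c_\mu)$ is constructible. Taking the minimum over the finite set of relevant $\mu$ then expresses $D_\gamma$ as a finite minimum of constructible functions, which is again constructible.

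The main obstacle is the algebraicity statement in the second step: unwinding the Zastava/Pl\"ucker-data description of $\oF{\lambda}$ to produce local algebraic lifts into $N_+(\K)$ that are compatible across overlaps, and controlling the $\K$-valuation of the action uniformly in the parameter $g$, requires some care. Once this is in place, the combination of local nilpotence of the $E_i$'s and Noetherian induction on $\oF{\lambda}$ gives the constructibility essentially for free.
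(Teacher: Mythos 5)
Your proposal is correct and rests on the same geometric skeleton as the paper's proof: both use the Zastava description of $\oF{\lambda}$ and the universal $N$-bundle trivialized away from $0$ to obtain, Zariski-locally, an algebraic lift of the BFK map into $N(\K)$, and then pull the valuation back along it. Where you diverge is in how constructibility of the valuation is actually established. The paper works with the full ind-scheme $\Fock^- \hat{\otimes} \K$ and simply asserts that $\val : \Fock^- \hat{\otimes} \K \rightarrow \Z \cup \{+\infty\}$ is constructible (with locally closed fibers), concluding by pullback on each affine open; you instead first cut down to the finite-dimensional $N_+(\K)$-stable subspace spanned by the subdiagrams of $\gamma$, so that $D_\gamma$ becomes a minimum of finitely many scalar valuations $\val(c_\mu)$, and then prove constructibility of each by hand: boundedness of $c_\mu$ in some $t^{-L}\O$ (quasi-compactness of $\oF{\lambda}$), regular coefficient functions $c_{\mu,j}$, and a Noetherian descending-chain argument to bound the possible values. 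Your route buys an elementary, self-contained verification of exactly the point the paper leaves implicit, at the cost of the extra reduction step; the paper's route is shorter but leans on the ind-scheme formulation. One small imprecision to fix: for the formal Kac-Moody group, $N_+(\K)$ is not generated by the one-parameter subgroups $x_i(a)$, so invariance of your subspace should instead be deduced from the fact that every element of $\overline{\n}_+ \otimes \K$ strictly lowers the box-count filtration (positive root vectors remove boxes), whence the pro-unipotent group, obtained by exponentiating, preserves the span of subdiagrams of $\gamma$; with that rewording your argument goes through.
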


\begin{proof}
By the discussion of Zastava spaces in section \ref{BFKBijectionSection}, $\oF{\lambda}$ maps into the moduli space of $N$ bundles on $\POne$ trivialized away from $0$ (we view this space only as a functor, ignoring issues of representablity). In particular, we have a universal $N$ bundle $\F_N$ on $\POne \times \oF{\lambda}$ that is trivialized on $(\POne - 0) \times \oF{\lambda}$. Let us form the associated vector bundle with fibers equal to the Fock space $\Fock^-$. As $N$ is pro-unipotent, $\F_N$ trivializes on any affine open cover, and there is no difficulty in constructing this associated bundle. Then $\vgamma \in \Fock^-$ determines a section of this bundle on $(\POne - 0) \times \oF{\lambda}$ by means of the trivialization. 

Then the function $D_\gamma$ on $\oF{\lambda}$ is given by taking the order of the pole of this section on $(\POne - 0) \times x$ for any point $x \in \oF{\lambda}$. We can check this explicitly by restricting to the formal punctured disk around $0$ in $\POne$  and choosing a trivialization of $\F_N$ that extends over the full (non-punctured) formal disk. The difference of the two trivializations on the punctured disk gives us a lift of the point $x$ to $N(\K)$ via the Zastava interpretation of the bijection $\oF{\lambda}\left(\C\right) \simeq S^0 \cap T^{-\lambda}$ (c.f the proof of Proposition \ref{EquivarianceProposition}). Using this we can easily see that the order of the pole of the section of the associated bundle coming from $\vgamma$ is exactly $D_\gamma(x)$.

Now, if we had a global trivialization of $\F_N$, then this section could be viewed as a map from $\F^\lambda$ to $\Fock^- \hat{\otimes} \K$, which we view as an ind-scheme (here we are identifying $\text{Spec }\K$ with the formal punctured disk centered at $0$ in $\POne$). The function $\val : \Fock^- \hat{\otimes} \K \rightarrow \Z \cup {+\infty}$ is constructible (in fact, it has locally closed fibers), and the function $D_\gamma$ would be the pull back of this function to $\F^\lambda$ via this section.

We do not have a global trivialization, but this bundle will trivialize on on any affine cover. So we have proved that $D_\gamma$ is constructible when restricted to every affine open, from we easily see that $D_\gamma$ is in fact constructible on the whole of $\oF{\lambda}$.
\end{proof}

Because the function is constructible, on each double MV cycle $Z$, there will be a dense open subset on which it takes a single value, which we call the {\bf generic value} of $D_\gamma$. To $Z$ we can associate a pre-NSS datum $M(Z)$ defined as follows. For every left-black Maya diagram $\gamma$, we set:
\begin{align}
M(Z)_\gamma = \text{ the generic value of } D_\gamma \text{ on } Z
\end{align}

Abusing notation, we will write $M(U)_\gamma = M(Z)_\gamma$ if $U$ is a dense subvariety of Z.

\begin{Remark}
The idea of defining these constructible functions in the case of a finite-dimensional group is due to Kamnitzer \cite{Kam}, who credits it to a discussion with D. Speyer.
\end{Remark}

\begin{Proposition}{\bf Geometric Construction of NSS Crystal Operator}{\label{GeometricConstructionOfNSSOperator}}
Let $Z$ be a double MV cycle, and let $M\left(Z\right)$ be its associated pre-NSS datum. Then,
\begin{align}
\hat{f}_{i} M\left(Z\right)_\bullet = M\left(f_{i} Z\right)_\bullet
\end{align}
Here $f_i$ is the geometrically defined crystal operator on double MV cycles, and $\hat{f}_{i}$ is the combinatorially defined crystal operator from the previous section.
\end{Proposition}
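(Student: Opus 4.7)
The plan is to express a generic point of $f_i(Z)$ in explicit group-theoretic terms via Corollary \ref{ExplicitCrystalFormula}, compute $D_\gamma$ on this point using the Fock space valuation formula \ref{FockSpaceValuationFormula}, and then match the outcome against the combinatorial formula for $\hat{f}_i$ from Lemma \ref{FormulaForNSSOperators}.

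Concretely, fix a left-black Maya diagram $\gamma$. Corollary \ref{ExplicitCrystalFormula} applied with $k=1$ produces a dense subvariety of $f_i(Z)$ whose $\C$-points all have the form $x_i(a t^{\phi_i(Z)-1}) \cdot z$, where $a \in \C^\times$ (coming from $p = at^{-1} \in \C[t^{-1}]^+_1$) and $z$ ranges over a dense subvariety of $Z$. Substituting into $D_\gamma$ and applying formula \ref{FockSpaceValuationFormula} yields
\begin{align*}
D_\gamma\bigl(x_i(a t^{\phi_i(Z)-1}) \cdot z\bigr) = \min_{\mu}\bigl\{\val(\vmu \cdot z) + (\phi_i(Z)-1)\cdot|\gamma \setminus \mu|\bigr\},
\end{align*}
the minimum taken over $\mu$ obtained from $\gamma$ by deleting boxes of color congruent to $i$ modulo $n$. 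Because only finitely many such $\mu$ appear, for sufficiently generic $z$ in $Z$ we can simultaneously assume $\val(\vmu \cdot z) = M(Z)_\mu$ for each of them. Passing to generic values therefore gives
\begin{align*}
M(f_i Z)_\gamma = \min_\mu\bigl\{M(Z)_\mu + (\phi_i(Z)-1)\cdot|\gamma \setminus \mu|\bigr\}.
\end{align*}
Comparing with Lemma \ref{FormulaForNSSOperators}, which states
\begin{align*}
(\hat{f}_i M(Z))_\gamma = \min_\mu\bigl\{M(Z)_\mu + (\hat{\phi}_i(M(Z))-1)\cdot|\gamma \setminus \mu|\bigr\},
\end{align*}
the proposition reduces to the single identity $\phi_i(Z) = \hat{\phi}_i(M(Z))$.

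The main technical obstacle is thus the compatibility of the geometric BFG parameter $\phi_i(Z)$ with the combinatorial NSS parameter $\hat{\phi}_i(M(Z))$. My approach is to prove this by induction along the crystal graph, starting at the trivial double MV cycle $Z_0 = \{t^0\}$, for which $M(Z_0) = 0$ and both quantities vanish. The inductive step amounts to propagating agreement of the full weight, $\wt(Z) = \wt(M(Z))$: once weights agree, the calculation above gives $M(f_j Z) = \hat{f}_j M(Z)$ for every $j$, and the standard crystal axioms then update both weights by the same shift $-\alpha_j^\vee$, forcing $\phi_i$ and $\hat{\phi}_i$ to continue to agree. Weight agreement itself reduces to identifying, for each $j$, the entry $M(Z)_{\Lambda_j}$ with a component of $\wt(Z)$, which one can check directly by evaluating $D_{\Lambda_j}$ on the coweight $-\lambda$ where $Z \subset \oF{\lambda}$, using the fact that $\Lambda_j$ behaves as a highest weight vector for the ambient $\widetilde{GL}_\infty$-structure on $\Fock^-$. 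This closes the induction and establishes the proposition.
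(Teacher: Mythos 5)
The first half of your argument is exactly the paper's proof: apply Corollary \ref{ExplicitCrystalFormula} with $k=1$ to write the generic point of $f_iZ$ as $x_i(at^{\phi_i(Z)-1})\cdot z$, compute $D_\gamma$ via Lemma \ref{FockSpaceValuationFormula}, pass to generic values, and compare with Lemma \ref{FormulaForNSSOperators}. You have also correctly isolated the point the paper leaves implicit, namely the identification $\phi_i(Z)=\hat{\phi}_i\left(M(Z)\right)$ of the geometric BFG string datum with the combinatorial one (the paper simply substitutes one for the other in this proof, and only later, in the proof of the main theorem, remarks ``by induction'' that $\varepsilon_i$, $\phi_i$ and $\wt$ commute with $M$). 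So your reduction of the Proposition to this compatibility is a fair and useful reading.

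The gap is in how you propose to close that compatibility. Your induction propagates only the equality of weights, and weight agreement does not determine $\phi_i$: by definition $\hat{\phi}_i(M)=\langle \wt(M),h_i\rangle+\hat{\varepsilon}_i(M)$, and $\hat{\varepsilon}_i(M)$ involves the entry $M_{s_i\Lambda_i}$, which is genuinely extra data not recoverable from $\wt(M)$ (equivalently, on the geometric side $\phi_i(Z)$ depends on the stratum parameter $\mu$ with $Z\subset\Stot^{\mu,\leq\lambda}$, not only on $\lambda$). Moreover the crystal axioms only control how $\varepsilon_i,\phi_i$ change along the $i$-string: applying $f_j$ with $j\neq i$ changes $\phi_i$ in a way your inductive step does not track, so ``weights continue to agree, hence the $\phi_i$'s continue to agree'' does not follow. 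Your observation about $D_{\Lambda_j}$ is fine as far as it goes --- the $\Lambda_j$ are highest-weight-type vectors, their valuations are constant on $T^{-\lambda}$, and this does pin down $\wt(M(Z))=\wt(Z)$ --- but it pins down only the weight. To repair the argument you would need to propagate agreement of the full string data, i.e. show that the generic values of $D_{\Lambda_i}$ \emph{and} $D_{s_i\Lambda_i}$ on $Z$ compute the BFG parameters $\varepsilon_i(Z),\phi_i(Z)$; this requires tracking how the $\Theta$-entries transform under the explicit formula of Lemma \ref{FormulaForNSSOperators} and, simultaneously, how the parameter $\mu$ transforms under the operators of Theorem \ref{GeneralizationOfBaumannGaussentFormula} (the affine analogue of Kamnitzer's computation of string data from BZ data), which is a substantially stronger induction than the one you describe.
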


\begin{proof} 
By Corollary \ref{ExplicitCrystalFormula}, $f_iZ$ contains a dense subvariety $U$ whose $\C$-points can each be written in the form $x_i(p) \cdot z$, where $\val(p) = \phi_i(Z) - 1$ and $z \in Z$. Because $U$ is dense, we have  

\begin{align*}
M\left(f_i Z\right)_\gamma = M\left(U\right)_\gamma
\end{align*}
Using the fact that every point of $U$ is of the form mentioned above, we can use Lemma \ref{FockSpaceValuationFormula} to compute:

\begin{align*}
 M\left(U\right)_\gamma = 
\min_{\small \begin{array}{c} \mu \text{ obtained by removing } \\ i \text{-colored boxes from } \gamma \end{array}} \left\{ M_\mu + |\gamma \backslash \mu| \cdot \left(\hat{\phi}_{i}\left(M\right)-1\right) \right\},
\end{align*}
which is precisely equal to $\hat{f_i} M\left(Z\right)_\bullet$ by Lemma \ref{FormulaForNSSOperators}.

\end{proof}

This geometric construction also explains the somewhat mysterious operator $\Theta$ from Definition \ref{preNSSdef}. Let $M = M\left(Z\right)$ be the pre-NSS datum corresponding to a double MV cycle $Z$. Then for any right-black Maya diagram $\tau$, we can define $\Theta\left(M\right)_\tau$ as before. For right-black diagrams $\tau$, we can define $D_\tau$ analagously as we did for left-black diagram using the action on $\Fock^+$. As before, $D_\tau$ is constructible:

\begin{Proposition}
The functions $D_\tau$ are constructible.
\end{Proposition}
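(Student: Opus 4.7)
The plan is to imitate the proof of Proposition \ref{ConstructibilityProposition} essentially verbatim, with $\Fock^-$ replaced by a suitable completion of $\Fock^+$. As before, I would work with the universal $N$-bundle $\F_N$ on $\POne \times \oF{\lambda}$ coming from the Zastava description in Section \ref{BFKBijectionSection}, which is canonically trivialized on $(\POne - 0) \times \oF{\lambda}$. Since $N$ is pro-unipotent, $\F_N$ trivializes on any affine open cover, so forming associated bundles will cause no difficulty.

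The new subtlety is that $\Fock^+$ is a \emph{lowest} weight representation of the minimal Kac-Moody group, so in order to obtain an action of the formal Kac-Moody group we must first pass to the completion $\widehat{\Fock^+}$ in which the charged-partition basis becomes a topological basis, as described in the remark preceding Section 4.1. Using this completion as fiber, I would form the associated pro-vector bundle on $\POne \times \oF{\lambda}$. The vector $|\tau\rangle \in \Fock^+ \subset \widehat{\Fock^+}$ then determines, via the canonical trivialization, a section of this bundle on $(\POne - 0) \times \oF{\lambda}$.

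Next, I would restrict to the formal punctured disk around $0$ and choose a trivialization of $\F_N$ that extends across $0$. Comparing trivializations yields, as in the proof of Proposition \ref{EquivarianceProposition}, the $N(\K)$-element representing the given point $x \in \oF{\lambda}$ under the bijection $\oF{\lambda}(\C) \simeq S^0 \cap T^{-\lambda}$. By exactly the same computation as in the left-black case, the order of the pole at $0$ of the section constructed above is $D_\tau(x)$. Since $\widehat{\Fock^+}$ is dual to the discrete space $\Fock^-$, the construction of $\widehat{\Fock^+} \hat{\otimes} \K$ from Section 4.1 applies, and the valuation function $\val : \widehat{\Fock^+} \hat{\otimes} \K \to \Z \cup \{\pm\infty\}$ has locally closed fibers. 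On any affine open of $\oF{\lambda}$ trivializing $\F_N$, the section is a morphism into $\widehat{\Fock^+} \hat{\otimes} \K$ and $D_\tau$ is the pullback of $\val$, hence constructible; gluing over an affine cover gives constructibility on all of $\oF{\lambda}$.

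The only real obstacle is the bookkeeping around the completion: one must verify that the associated bundle with fiber $\widehat{\Fock^+}$ makes honest sense (which follows from the pro-unipotence of $N$ and the fact that each Chevalley generator acts locally finitely on the topological basis), and that the locally-closed-fiber property of $\val$ survives passing from $\Fock^- \otimes \K$ to $\widehat{\Fock^+} \hat{\otimes} \K$. Both of these are already built into the definitions made in Section 4.1, so once the completion is set up carefully the proof proceeds identically to the left-black case.
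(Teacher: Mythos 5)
Your geometric setup is the right one and matches the intended argument: the universal $N$-bundle on $\POne \times \oF{\lambda}$, the associated bundle with fiber the (completed) Fock space, the section determined by the basis vector indexed by $\tau$ via the trivialization away from $0$, and the identification of the order of the pole with $D_\tau(x)$. The point you dwell on, completing $\Fock^+$ so that the formal Kac-Moody group acts, is legitimate but minor; it is already provided for by the remark in Section 4 and by the definition of $V \hat{\otimes} \K$ for $V$ dual to a discrete space.

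The genuine gap is in your final step, where you assert that $\val : \widehat{\Fock^+} \hat{\otimes} \K \to \Z \cup \{\pm\infty\}$ has locally closed fibers and that this is ``built into the definitions'' of the valuation subsection. That is precisely where the new difficulty lies, and the assertion is false as stated: since $\widehat{\Fock^+}$ is dual to a discrete space, the filtration by the subsets $\widehat{\Fock^+} \hat{\otimes}\, t^\ell \O$ is no longer exhaustive, so the value $-\infty$ can occur, and its fiber is the complement of a countable increasing union of closed subsets --- not locally closed, and not constructible. Consequently you cannot simply pull back: a priori $D_\tau^{-1}(-\infty)$ is the complement in $\oF{\lambda}$ of a countable union of constructible sets and need not be constructible. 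What is missing, and what is the actual content of the proof, is the observation that $D_\tau$ never takes the value $-\infty$ on $\oF{\lambda}$: under the bijection of Theorem \ref{BFKBijection} the points of $\oF{\lambda}(\C)$ correspond to ``good'' elements of $N(\K)$ in the sense of \cite[Lemma 2.5]{BFK}, and for these the value $-\infty$ does not occur. Once $-\infty$ is excluded, the fibers over all remaining values are locally closed, and your pullback-over-an-affine-cover argument goes through exactly as in Proposition \ref{ConstructibilityProposition}.
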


\begin{proof}
The proof of Proposition \ref{ConstructibilityProposition} carries through except for one subtlety. The valuation function $\val: \Fock^+ \hat{\otimes}\K \rightarrow \Z \cup {\pm \infty}$ is no longer constructible. However, the fibers of each point except $-\infty$ are locally closed, and by the discussion of ``good'' elements in \cite{BFK}, the function $D_\gamma$ never takes the value $-\infty$ on $\oF{\lambda}$. So the argument still works.
\end{proof}

Then a calculation using charged partitions gives us the following very nice formula: 

\begin{Proposition}
$\Theta\left(M\right)_\tau = \text{ the generic value of } D_\tau \text{ on } Z$
\end{Proposition}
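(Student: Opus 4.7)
The plan is to reduce the claim to a pointwise comparison of valuations at a generic point of $Z$. By the definition of $\Theta$, it suffices to show that for every sufficiently large interval $I$ containing the support of $\tau$, the constructible functions $D_{\gamma_I}$ and $D_\tau$ have the same generic value on $Z$. I would in fact establish the stronger pointwise identity $D_{\gamma_I}([g]) = D_\tau([g])$ on a dense open subvariety of $Z$, for all $I$ that are large enough uniformly in $[g]$.

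First, fix a generic $[g] \in Z$ with lift $g \in N(\K)$ via the Braverman--Finkelberg--Kazhdan bijection $\oF{\lambda}(\C) \simeq S^0 \cap T^{-\lambda}$. Since $Z \subset \oF{\lambda}$ for a fixed $\lambda$, the element $g$ lies in a bounded piece of the natural filtration on $N(\K)$, uniformly as $[g]$ ranges over $Z$. Combined with the local nilpotence of the Chevalley action on both $\Fock^-$ and $\Fock^+$, this yields a single finite window $W \subset \Z + \tfrac{1}{2}$ of Maya-diagram positions such that, for every $[g] \in Z$, the action of $g$ on any basis vector can only reconfigure beads inside $W$.

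The main step is then to pick $I \supset W$ (also containing the support of $\tau$) and match the expansions of $\langle \gamma_I| \cdot g \in \Fock^- \otimes \K$ and $\langle \tau| \cdot g \in \Fock^+ \hat\otimes \K$ term by term. By construction $\gamma_I$ and $\tau$ have identical bead patterns inside $W$, while outside $I$ each sits in its own vacuum configuration, which is invariant under the portion of $g$ that actually acts. Expanding each $x_i(p_i)$ as an exponential and applying the charged-partition formulas for $E_i$ and $F_i$, the sets of reachable reconfigurations (restricted to $W$) and the $\K$-coefficients attached to them agree on the two sides, after identifying each left-black reconfiguration with the right-black one obtained by restoring the vacuum outside $W$. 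Taking minimum $t$-orders therefore gives $\val(\langle \gamma_I| \cdot g) = \val(\langle \tau| \cdot g)$, and the proposition follows by passing to generic values.

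The principal obstacle will be making the uniform-truncation step precise: translating the containment $Z \subset \oF{\lambda}$ into the existence of a single effective window $W$ valid for all $[g] \in Z$, and then verifying the matching of $\K$-coefficients between the two Fock-space expansions. This is exactly the ``calculation using charged partitions'' referenced in the statement; it rests on the duality between addable and removable $i$-boxes in $\Fock^-$ versus $\Fock^+$, together with the observation that far outside $I$ the diagrams $\gamma_I$ and $\tau$ are color-inverses of one another, so the vacuum tails contribute nothing to the valuation on either side.
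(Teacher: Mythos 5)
Your matching step (removing $i$-coloured boxes from $\gamma_I$ versus adding them to $\tau$, with equal $\K$-coefficients) is the same calculation the paper performs, and it is fine. The genuine gap is the uniform-truncation step you yourself flag: you claim that because $Z \subset \oF{\lambda}$, a generic lift $g \in N_+(\K)$ lies in a ``bounded piece of the natural filtration on $N(\K)$'', so that its action reconfigures beads only inside one finite window $W$, uniformly over $Z$. This does not follow, and as stated it is false. Membership of $[g]$ in $S^0 \cap T^{-\lambda}$ constrains cosets modulo $N_+(\O)$, not which affine root subgroups occur in $g$; an element of the pro-unipotent group $N_+(\K)$ generally involves infinitely many of them. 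Local nilpotence of each individual Chevalley generator does not help: on the lowest-weight side $\Fock^+$ the action of a full group element is not locally finite (this is exactly why the paper must pass to the completion $\Fock^+ \hat{\otimes} \K$), so $\langle\tau|\cdot g$ is in general an infinite sum whose support is not confined to any finite window, and a priori the unmatched ``deep'' terms could govern the valuation. Your argument gives no control over these terms, so the identity $\val(\langle\gamma_I|\cdot g) = \val(\langle\tau|\cdot g)$ is not established.

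The paper closes this gap differently, and you need some substitute for its mechanism. It invokes Corollary \ref{ExplicitCrystalFormula} together with the fact that every double MV cycle is obtained from the unique weight-$0$ cycle by finitely many crystal operators: hence a dense subset of $Z$ consists of points expressible as a product of a \emph{bounded} number of one-parameter Chevalley elements $x_{i_1}(p_1)\cdots x_{i_m}(p_m)$. For such a point the expansion of $\langle\tau|\cdot z$ is a finite sum in which at most a bounded number of boxes are added, so choosing $I$ (hence the integer $N$ in the box-matching) large enough, the bijection between removals from $\gamma_I$ and additions to $\tau$ accounts for every term on both sides and the valuations agree on that dense subset, which suffices for generic values. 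If you want to avoid the crystal-operator corollary, you would instead have to prove directly that for $[g] \in S^0 \cap T^{-\lambda}$ the terms of $\langle\tau|\cdot g$ adding more than some $\lambda$-dependent number of boxes cannot achieve the minimum valuation; that is a real statement requiring proof (it is essentially the ``good elements'' finiteness of \cite{BFK} made quantitative), not a consequence of $Z$ lying in $\oF{\lambda}$.
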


\begin{proof}
Let $Z$ be a double MV cycle as above, and let $M = M \left(Z\right)$ be the associated NSS datum. Let $\tau$ be a right-black Maya diagram. Then recall that $\Theta\left(M\right)_\tau = M_{\gamma_I}$, where $I$ is a sufficiently large interval in $\Z$, and $\gamma_I$ is the left-black Maya diagram obtained by inverting all the colors of $\tau$ outside the interval $I$.

Let us now interpret this in terms of charged partitions. We view diagram $\tau$ as an upward-facing charged partition, and $\gamma_I$ is a downward-facing charged partition. Both of these partitions are colored by the integers modulo $n+1$. Then it is easy to see that removing any number of boxes of a fixed color from $\gamma_I$ corresponds bijectively to adding boxes of the same color to $\tau$. Moreover, there is an integer $N$ such that for any sequence $(i_1, \cdots i_N)$ of colors, the process of removing boxes of color $i_1$, followed by removing boxes of color $i_2$, and so on up to removing boxes of color $i_N$ from $\gamma_I$ corresponds bijectively to a process of adding boxes of the same colors to $\tau$. Furthermore, by choosing $I$ large enough, we can choose $N$ arbitrarily large. 

However, removing boxes from $\gamma_I$ is exactly how the Chevalley generators act on $\Fock^-$, and adding boxes to $\tau$ is how the Chevalley generators act on $\Fock^+$. We thus see that $D_{\gamma_I}\left(z\right) = D_\tau\left(z\right)$ agree for all $z \in Z$ that can be written using a product of fewer than $N$ Chevalley subgroups. But this is always true for $z$ in an open dense subset of $Z$ by Corollary \ref{ExplicitCrystalFormula} and the fact that every double MV cycle is the result of applying a finite number of crystal operators to the unique double MV cycle of weight $0$.

\end{proof}

\vspace{0.3in}

As before, let $\cL$ denote the set of double MV cycles, and let $\cN$ denote the set of NSS data. The previous lemma implies that the operation $M$ defines a map $M : \cL \rightarrow \cN$, i.e. the pre-NSS datum associated to an double MV cycle is in fact an NSS datum. Moreover, by the definition of NSS data, we see that this map is surjective. With these observations, we state our main theorem.

\begin{Theorem} The map $M$ is an isomorphism of crystals. 

\end{Theorem}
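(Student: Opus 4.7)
By the construction of $\cN$ and by Proposition \ref{GeometricConstructionOfNSSOperator}, the map $M$ is already surjective and satisfies $M \circ f_i = \hat{f}_i \circ M$. To promote this into an isomorphism of crystals I must verify (i) $M$ preserves the remaining data $\wt$, $\phi_i$, $\varepsilon_i$; (ii) $M$ partially commutes with $\hat{e}_i$; and (iii) $M$ is injective. The main obstacle is step (iii), which I handle by induction on the height of weight using (i) and (ii).

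First I would check that $M(Z_0) = O$, where $Z_0 \in \cL$ is the unique double MV cycle of weight zero. Under the BFK bijection $Z_0$ corresponds to the identity coset, so $D_\gamma(Z_0) = \val(\vgamma) = 0$ for every left-black Maya diagram $\gamma$, giving $M(Z_0) = O$. Weight preservation $\wt = \wt \circ M$ then follows by induction on the length of any presentation $Z = f_{i_1} \cdots f_{i_k}(Z_0)$, since both $\wt$ and $\wt \circ M$ vanish at $Z_0$ and shift by $-\check{\alpha}_i$ under $f_i$ versus $\hat{f}_i$. For the identity $\hat{\phi}_i(M(Z)) = \phi_i(Z)$ I would compare the geometric formula for $M(f_i Z)_\gamma$ coming from Corollary \ref{ExplicitCrystalFormula} together with Lemma \ref{FockSpaceValuationFormula} (which produces the linear coefficient $\phi_i(Z) - 1$) against the combinatorial formula from Lemma \ref{FormulaForNSSOperators} (which produces $\hat{\phi}_i(M(Z)) - 1$); choosing $\gamma$ admitting both a removable $i$-box and the option of removing none forces the two coefficients to agree. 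Preservation of $\varepsilon_i$ is then immediate from the crystal identity $\varepsilon_i = \phi_i - \langle \wt, \alpha_i \rangle$.

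The partial commutation $M(e_i Z) = \hat{e}_i M(Z)$, valid whenever $e_i Z \neq 0$, is a formal consequence of step (i): writing $Z = f_i(e_i Z)$ and applying $M$ gives $M(Z) = \hat{f}_i M(e_i Z)$, so $\hat{e}_i M(Z) = M(e_i Z)$ by definition of $\hat{e}_i$ as partial inverse of $\hat{f}_i$.

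For injectivity I would induct on the height of $-\wt(N)$ for $N \in \cN$. The base case $\wt(N) = 0$ forces $N = O$, and since $Z_0$ is the unique weight-zero element of $\cL$, $M^{-1}(O) = \{Z_0\}$. For $N$ with $\wt(N) \neq 0$: pick any preimage $Z \in M^{-1}(N)$; because $\cL \cong B(\infty)$ is connected under the $e_i$'s, some $i$ satisfies $\varepsilon_i(Z) \geq 1$, and by step (i) we obtain $\hat{\varepsilon}_i(N) \geq 1$. For this $i$, every $Z' \in M^{-1}(N)$ has $\varepsilon_i(Z') = \hat{\varepsilon}_i(N) \geq 1$, so $Z' = f_i(e_i Z')$ with $e_i Z' \in M^{-1}(\hat{e}_i N)$ by step (ii). The map $Z' \mapsto e_i Z'$ is then injective (since $f_i$ is injective on $\cL$) and lands in $M^{-1}(\hat{e}_i N)$, which has strictly smaller height, so by induction $|M^{-1}(N)| \leq |M^{-1}(\hat{e}_i N)| = 1$. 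Surjectivity then forces $|M^{-1}(N)| = 1$. The hardest part is arranging all of this non-circularly, since $\cN$ is not yet known to satisfy the crystal axioms; accordingly, the argument must use the combinatorial definitions of $\hat{e}_i$ and $\hat{\varepsilon}_i$ together with the crystal structure on $\cL$ pulled back through $M$, rather than any a priori crystal properties of $\cN$.
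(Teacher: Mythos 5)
Your proposal is correct and follows essentially the same route as the paper: surjectivity is immediate from the definition of $\cN$, the intertwining $M\circ f_i=\hat f_i\circ M$ comes from Proposition \ref{GeometricConstructionOfNSSOperator}, and injectivity is proved by induction on the height of the weight, using that $\wt$ and $\varepsilon_i$ are computable from the NSS datum, choosing $i$ with $\varepsilon_i\neq 0$, and cancelling $\hat f_i$ via its partial inverse $\hat e_i$. Your treatment is somewhat more explicit than the paper's (the fiber-counting formulation, the verification of the string-function compatibilities, and the care about not presupposing crystal axioms for $\cN$), but these are refinements of the same argument rather than a different proof.
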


\begin{proof}
In light of the previous proposition, we only need to prove that the map $M$ is a bijection. As we observed above, $M$ is surjective, so we only need to prove injectivity. 

First notice by induction that the $i$-string functions $\varepsilon_i$ and $\phi_i$  and the weight function $\wt$ also commute with the map $M$.

We proceed by induction on the weight (more precisely on the height of the weight). Because there is only one double MV cycle of weight 0 corresponding to the unit point $1 \in N(\K)$, we immediately see that this double MV cycle is determined by its NSS datum. Thus we have the base case for our induction. 

Let $W$ and $Z$ be double MV cycles of strictly negative weight with $M(W) = M(Z)$, and suppose $M$ is a injection for all double MV cycles of larger weight. Because $W$ and $Z$ are not weight $0$, by the general structure of the $B(\infty)$ crystal, there exists an i and j so that $e_i(Z) \neq 0$ and $e_j(W) \neq 0$ (recall the operators $e_i$ and $e_j$ are the partial inverses of $f_i$ and $f_j$ from Definition \ref{BFGCrystalDefinition}). In the $B(\infty)$ crystal, $e_i(Z) \neq 0$ iff $\varepsilon_i(Z) \neq 0$. Moreover, $\varepsilon_i(Z) = \varepsilon_i(W)$ because this quantity can be computed combinatorially from the NSS datum. 

So there exists $i$ such that $e_i(Z) \neq 0$ and $e_i(W) \neq 0$. Thus we can write $W = f_ie_i(W)$ and $Z=f_ie_i(Z)$. Now we can apply Propostion \ref{GeometricConstructionOfNSSOperator} to get $M(W) = \hat{f}_{i}M(e_i(W))$ and $M(Z) = \hat{f}_{i}M(e_i(Z))$. Since $M(W) = M(Z)$, we can apply $\hat{e}_i$ (the partial inverse to $\hat{f}_i$ from Theorem \ref{NSSCrystalStructure}) to get $M(e_i(W)) = M(e_i(Z))$. By induction, $e_i(W) = e_i(Z)$. Applying $f_i$ to both sides, we get $W=Z$
\end{proof}

From this theorem, we immediately deduce the following corollary.

\begin{Corollary}
The NSS crystal $\cN$ is the $B\left(\infty\right)$ crystal for $\asl_n$. In particular, this holds for the previously unknown case of $\asl_2$.
\end{Corollary}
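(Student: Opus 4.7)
The proof should be essentially immediate from the preceding Theorem combined with the Braverman--Finkelberg--Gaitsgory Theorem of the BFG Crystal Structure section. Here is how I would lay out the argument.

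First, I would invoke the BFG theorem stating that the set $\cL$ of double MV cycles, equipped with the starred crystal structure $(e_i,f_i,\varepsilon_i,\phi_i,\wt)$, carries the structure of the $B(\infty)$ crystal for the dual Lie algebra $\g^\vee$. In our setting $\g=\asl_n$, whose generalized Cartan matrix is symmetric, so $\g^\vee=\g=\asl_n$ (this self-duality was recorded in the subsection on the dual Lie algebra). Consequently $\cL$, with its BFG crystal structure, is a model for $B(\infty)$ for $\asl_n$.

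Second, the preceding Theorem asserts that $M:\cL\to\cN$ is an isomorphism of crystals. Composing this isomorphism with the BFG identification $\cL\cong B(\infty)$ produces an isomorphism $\cN\cong B(\infty)$ of $\asl_n$-crystals, which is the content of the corollary.

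Finally, for the \emph{In particular} clause, I would point out that every input to the argument works uniformly in $n\geq 2$. The BFG theorem is valid for all untwisted affine types; the generalized Baumann--Gaussent formula (Theorem on the generalization of the Baumann--Gaussent formula) and its Corollary on the explicit crystal formula hold in all untwisted affine types; the Fock space valuation Lemma and Proposition on the geometric construction of the NSS operator impose no restriction on $n$; and the inductive proof of the Theorem on $M$ uses only the general structure of $B(\infty)$. At no step is Stembridge's simply-laced criterion invoked, so the case $n=2$ is obtained on the same footing as $n\geq 3$. This is the only point where one should be careful: one must verify that the combinatorial definition of $\cN$ and of $\hat f_i$ in the previous section, which was phrased for arbitrary $n\geq 2$, is what is matched by the geometry, and this matching was precisely the content of the Proposition giving the geometric construction of the NSS crystal operator. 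No further obstruction arises, and the corollary follows.
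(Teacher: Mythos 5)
Your argument is correct and is exactly the paper's own (essentially one-line) deduction: the main theorem gives the crystal isomorphism $M:\cL\to\cN$, the Braverman--Finkelberg--Gaitsgory theorem identifies $\cL$ with $B(\infty)$ for $\g^\vee=\asl_n$ (using self-duality), and no step of the geometric argument invokes Stembridge's criterion, so $n=2$ is covered. Nothing further is needed.
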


\end{document}